\def\mR{\mathbb{R}}
\def\mN{\mathbb{N}}
\def\mC{\mathbb{C}}
\def\C{\mathrm{C}}
\def\R{\mathrm{R}}
     \def\section{\@startsection{section}{1}%
     \z@{.7\linespacing\@plus\linespacing}{.5\linespacing}%
     {\bfseries
     \centering
     }}
     \def\@secnumfont{\bfseries}
\theoremstyle{definition}
\theoremstyle{remark}
\numberwithin{equation}{section}
\newtheorem{Thm}{Theorem}[section]
\newtheorem{Def}{Definition}[section]
\newtheorem{Lem}{Lemma}[section]
\newtheorem{Pro}{Proposition}[section]
\newtheorem{Hyp}{Hypothesis}[section]
\newtheorem{Rem}{Remark}[section]
\numberwithin{equation}{section}
\begin{document}

\title[Stochastic Analysis of Nonlinear Wave Equations]{Stochastic Analysis and White Noise Calculus of Nonlinear Wave Equations with Application to Laser Generation and Propagation}
\author[S. S. Sritharan]{S. S. Sritharan*}
\address{S. S. Sritharan: NRC-Senior Research Fellow, National Academies of Science, Engineering and Medicine,  U. S. Air Force Research Laboratory, Wright Patterson Air Force Base, Ohio 45433, U. S. A.}
\thanks{* This research has been supported by the U. S. Air Force Research Laboratory through the National Research Council Senior Research Fellowship of the National Academies of Science, Engineering and Medicine}
\email{Provostsritharan@gmail.com}
\urladdr{https://www.linkedin.com/in/dr-sritharan/}

\author[S. Mudaliar]{Saba Mudaliar}
\address{Saba Mudaliar: U. S. Air Force Research Laboratory, Wright Patterson Air Force Base, Ohio 45433, U. S. A}
\email{saba.mudaliar@us.af.mil}

\subjclass[2010] {Primary 81S20, 81V10, 60G51; Secondary 60H15, 60H17}

\keywords{White noise calculus, semilinear stochastic evolutions, semigroup methods, Maxwell-Dirac equations, Zakharov system, stochastic Schr\"odinger equation, nonlinear schrodinger equation, stochastic quantization.}

\maketitle

\begin{abstract}
	In this paper we study a large class of nonlinear stochastic wave equations that arise in laser generation models and models for propagation in random media in a unified mathematical framework. Continuous and pulse-wave propagation models, free electron laser generation models, as well as laser-plasma interaction models have been cast in a convenient and unified abstract framework as semilinear evolution equations in a Hilbert space to enable stochastic analysis.  We formulate It\^o calculus and white noise calculus methods of treating stochastic terms and prove existence and uniqueness of mild solutions. 
\end{abstract}

\section{Introduction}
Mathematical study of high energy laser (HEL) and high power microwave (HPM) generation and propagation characteristics have numerous applications in engineering sciences \cite{Zohuri2016}. This subject has inspired extensive research in applied mathematics as well. In this paper we will undertake a comprehensive mathematical study of a number of models that arise in this subject using techniques of stochastic analysis and white noise calculus. The general philosophy of stochastic quantization for quantum field was proposed by \cite{Parisi1981} has also influenced this paper. Our study will include nonlinear Schr\"odinger equations and nonlinear Klein-Gordon equations modeling respectively continuous and pulse wave propagation in the atmosphere \cite{Sprangle2003}, Maxwell-Dirac equations modeling laser generation model for the free electron laser (FEL)\cite{Madey2010} and also Zakharov system modeling laser propagation through plasma \cite{Sulem1999}. It turns out that these nonlinear wave equations from such diverse applications can be cast as semilinear evolution in a Hilbert space $H$. We refer to \cite{Reed1976, Strauss1978, Strauss1989} for comprehensive study of these class of semilinear wave equations in the deterministic setting establishing solvability theorems and scattering theory. In forthcoming papers \cite{Sritharan2025a, Sritharan2025b} we study various convergent approximations and nonlinear filtering of the models studied in this paper. We also would like to point out that the approach taken in this paper is also very much in the spirit of \cite{Ouerdiane1998}.

We will consider a general class of stochastic semilinear evolution of the type:

\begin{equation}
	\frac{\partial \varphi}{\partial t} = \left(- iA +  V(t)\right )\varphi +J(\varphi), \label{eqn:1.1}
\end{equation}
where $A$ is a self-adjoint unbounded operator, $V$ is a space-time random field, and $J(\cdot)$ is a nonlinear interaction function. We will formulate the mild form of the above semilinear stochastic evolution in three levels of stochastic quantization with the random potential $V$ represented by stochastic, white noise and quantum stochastic process respectively:

(i)  It\^o calculus formulation:
\begin{equation}
	\varphi(t)= e^{-i A t}\varphi(0) +\int_{0}^{t}e^{-i A(t-s)}	 J(\varphi(s)) ds+\int_{0}^{t}e^{-i A(t-s)}\varphi(s) dW(s), \label{eqn:1.2}
\end{equation}
where $W$ is a $H$-valued Wiener process with covariance operator $Q$ (see \cite{DaPrato1996, DaPrato2014}).

(ii) White noise calculus formulation with Wick products \cite{Sritharan2023}:
\begin{equation}
	\varphi(t)= e^{-i A t}\varphi(0) +\int_{0}^{t}e^{-i A(t-s)}	 :J(\varphi(s)): ds+\int_{0}^{t}e^{-i A(t-s)}Z(s):\varphi(s)ds,\label{eqn:1.3}
\end{equation}
where $Z \in (E)^{*}$ is a white noise distribution \cite{Obata1994,Obata1994a, Kuo1996, Ji2021} in the Fock space Gelfand triple as defined later in this paper.
Here the symbol $":"$ denotes Wick product \cite{Kuo1996} and  $:J(\varphi):$ is the wick quantization of the nonlinearity, both will be defined later.

(iii) White noise calculus formulation with Wick products and "white noise operator"-valued noise $\Xi \in {\mathcal L}((E)\otimes H,(E)^{*}\otimes H)$ (quantum stochastic process) in the Fock space Gelfand triple \cite{Obata1994, Kuo1996, Ji2021}:
\begin{equation}
	\varphi(t)= e^{-i A t}\varphi(0) +\int_{0}^{t}e^{-i A(t-s)}	 J(\varphi(s))^{\diamond} ds+\int_{0}^{t}e^{-i A(t-s)}\Xi \diamond \varphi(s) ds.\label{eqn:1.4}
\end{equation}

Here the symbol $"\diamond"$ denotes operator Wick product \cite{Obata1999} and  $J(\varphi)^{\diamond}$ is the operator wick quantization of the nonlinearity, both will be defined later.

\section{Unified Abstract Nonlinear Wave Equation-I: Deterministic Medium}

In this section we will describe the mathematical structure of operators $A$ and $J$ and will indicate that these properties are satisfied by the five specific laser generation and propagation models identified later in the paper. We will also prove a solvability theorem for a deterministic semilinear evolution equation associated with (\ref{eqn:1.1}). This solvability theorem and analyticity theorem are needed for the white noise treatment of (\ref{eqn:1.3}) and (\ref{eqn:1.4}) since the $S$-transform leads to such a deterministic evolution and the characterization theorem for the inverse $S$-transform requires such a result.

\subsection{The Free Propagator and its $m$-Dissipative Generator}	
Let us begin with an important class of operators that cover the linear part of the equation \ref{eqn:1.1}. Although the class of nonlinear dissipative operators (or accretive operators) has led to powerful mathematical developments in nonlinear evolution equations \cite{Barbu1976}, we will only consider the linear case here. A linear operator $A$ with domain $D(A)$ in a Banach space $X$ is called dissipative \cite{Hille1996} if 
\begin{displaymath}
	\|u-\lambda Au\|\geq \|u\|, \forall u\in D(A)\mbox{ and } \forall \lambda >0.	
\end{displaymath}
Let $X$ be a Hilbert space. It can be shown that a linear operator $A$ in $X$ is dissipative if and only if
\begin{displaymath}
	\langle Au, u\rangle \leq 0, \hspace{.1in} \forall u\in D(A).
\end{displaymath}	
\begin{Def}
	An operator $A$ in $X$ is $m$-dissipative if
	
	(i)  $A$ is dissipative,
	
	(ii) $\forall \lambda >0$ and $\forall f\in X$ there exists $u\in D(A)$ such that $u-\lambda Au=f$.
\end{Def}
We will collect some relevant facts on $m$-dissipative operators on a Hilbert space in the theorem below (see also \cite{Cazenave1998}):	
\begin{Thm} \label{Thm:2.1}
	\begin{enumerate}
		\item If $A$ is $m$-dissipative in $X$ then $D(A)$ is dense in $X$.
		\item	If $A$ is a skew-adjoint operator in $X$ then $A$ and $-A$ are $m$-dissipative. 
		\item	If $A$ is a self-adjoint operator and $A\leq 0$, i.e., $\langle Au, u\rangle \leq 0, \forall u\in D(A)$ then $A$ is $m-dissipative$.
	\end{enumerate}
\end{Thm}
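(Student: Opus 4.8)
The plan is to treat the three assertions through the resolvent operator $J_\lambda = (I-\lambda A)^{-1}$, whose existence and contractivity are the analytic backbone of the $m$-dissipative calculus. Once $A$ is known to be dissipative, the inequality $\|u - \lambda A u\| \ge \|u\|$ shows that $I - \lambda A$ is injective, and combined with the surjectivity built into the definition it becomes a bijection from $D(A)$ onto $X$; the same inequality then yields $\|J_\lambda f\| \le \|f\|$. Throughout I would work with the Hilbert-space characterization $\mathrm{Re}\langle Au, u\rangle \le 0$ of dissipativity already recorded above, so that the whole argument reduces to inner-product computations together with orthogonal-complement reasoning.

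For part (1), I would argue by contradiction. Suppose $D(A)$ is not dense and pick $v \ne 0$ with $v \perp \overline{D(A)}$. Using surjectivity I solve $(I - \lambda A)u_\lambda = v$ with $u_\lambda \in D(A)$, and then pair this identity with $u_\lambda$. Because $u_\lambda \in D(A)$ while $v \perp D(A)$, the right-hand term $\langle v, u_\lambda\rangle$ vanishes, leaving $\|u_\lambda\|^2 = \lambda\,\mathrm{Re}\langle A u_\lambda, u_\lambda\rangle \le 0$. Hence $u_\lambda = 0$, and then $v = u_\lambda - \lambda A u_\lambda = 0$, a contradiction. This route is clean and avoids any passage to the limit $\lambda \to 0^+$.

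For parts (2) and (3) the dissipativity half is immediate. If $A$ is skew-adjoint then $\langle Au, u\rangle = \langle u, A^* u\rangle = -\langle u, Au\rangle$, forcing $\langle Au, u\rangle$ to be purely imaginary so that $\mathrm{Re}\langle Au, u\rangle = 0$ (and likewise for $-A$); if $A = A^* \le 0$ the condition $\langle Au, u\rangle \le 0$ is the hypothesis itself. The remaining work is the range condition, for which I would show $I - \lambda A$ is a bijection. Injectivity and closedness of the range follow from the lower bound $\|(I-\lambda A)u\| \ge \|u\|$ together with the closedness of $A$ (skew-adjoint and self-adjoint operators are closed). To see the range is dense, suppose $v \perp \mathrm{Range}(I - \lambda A)$; then $\langle Au, v\rangle = \tfrac{1}{\lambda}\langle u, v\rangle$ for all $u \in D(A)$, which places $v \in D(A^*)$ with $A^* v = \tfrac{1}{\lambda} v$. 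In the skew-adjoint case $A^* = -A$ gives $\langle Av, v\rangle = -\tfrac{1}{\lambda}\|v\|^2$, incompatible with $\mathrm{Re}\langle Av,v\rangle = 0$ unless $v = 0$; in the self-adjoint case $A = A^*$ gives $\langle Av, v\rangle = \tfrac{1}{\lambda}\|v\|^2 \ge 0$, incompatible with $A \le 0$ unless $v = 0$. Thus the range exhausts $X$ and the surjectivity requirement holds.

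The step I expect to be the main obstacle is establishing surjectivity cleanly, namely combining the closed-range estimate with the dense-range orthogonality argument while correctly identifying the adjoint domain $D(A^*)$; this is exactly where skew- or self-adjointness is genuinely used and where a careless argument risks circularity. An equally valid alternative would invoke the spectral theorem—the spectrum of $A$ lies on the imaginary axis in the skew case and in $(-\infty,0]$ in the self-adjoint case, so $\tfrac{1}{\lambda}>0$ belongs to the resolvent set and $I-\lambda A = -\lambda\bigl(A - \tfrac{1}{\lambda} I\bigr)$ is automatically invertible—but I would prefer the elementary inner-product proof above, since it keeps the argument self-contained and uniform across all three parts.
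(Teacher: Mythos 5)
Your proof is correct. The paper does not actually prove Theorem \ref{Thm:2.1}; it records these facts with a pointer to Cazenave--Haraux \cite{Cazenave1998}, and your argument is essentially the standard proof given there: density via the orthogonality computation $\|u_\lambda\|^2 = \lambda\,\mathrm{Re}\langle Au_\lambda,u_\lambda\rangle \le 0$, and the range condition via the lower bound $\|(I-\lambda A)u\|\ge\|u\|$ (closed range, using closedness of $A$) combined with the observation that a vector orthogonal to the range would be an eigenvector of $A^{*}$ with eigenvalue $1/\lambda>0$, which is impossible for a skew-adjoint or nonpositive self-adjoint operator. Your use of $\mathrm{Re}\langle Au,u\rangle\le 0$ is in fact the correct form of the Hilbert-space dissipativity criterion in the complex case, which the paper states without the real part.
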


We recall here that if $A$ is self-adjoint then $iA$ is skew-adjoint:
\begin{displaymath}	
	(iA)^{*}=-iA^{*}=-iA.
\end{displaymath}		

\begin{Rem}
	The following examples of skew-ajoint operators are encountered later in section 4.		
	
	(1) Free Schrodinger operator:
	\begin{displaymath}
		A=i\Delta.
	\end{displaymath}
	Here $\Delta$ can be defined as a self-adjoint operator with a suitable choice of domain and hence $i\Delta$ is a skew-adjoint operator and is $m$-dissipative.
	
	(2) Wave operator: let $B=\sqrt{ -\Delta +k_{0}^{2}I}$, $D(B)=\left \{u \in L^{2}(\mR^{3}); Bu\in L^{2}(\mR^{3})\right \}$, and we define 
	\begin{displaymath}
		A= i\begin{bmatrix}
			0 & I\\
			-B^{2} & 0
		\end{bmatrix}.
	\end{displaymath}
	Then $A$ is a self-adjoint operator and $iA$ is skew-adjoint and $m$-dissipative.
	
	Moreover, the propagator associated with $iA$ is given by \cite{Segal1963,Reed1976}:
	\begin{displaymath}
		e^{-itA}=\begin{bmatrix}
			cos (tB) & B^{-1}sin (tB)\\
			-B sin(tB) & cos (tB)
		\end{bmatrix}.	
	\end{displaymath}
	
	(3) Dirac operator
	\begin{displaymath}
		D_{e}=-i \alpha\cdot\nabla + m\beta.	
	\end{displaymath}
	To understand this operator we describe it componentwise by setting $D_{e}u=v \in \mC^{4}$. Here the four dimensional complex vector space $\mC^{4}$ is called spinor space.
	\begin{displaymath}
		v_{j}=(D_{e}u)_{j}=i^{-1} \sum_{l=1}^{3}\sum_{k=1}^{4}(\alpha_{l})_{jk}\frac{\partial u_{k}}{\partial x_{l}}(x) +\sum_{k=1}^{4}\beta_{jk}u_{k}(x),
	\end{displaymath}
	where $\alpha_{k}$ and $\beta$ are Hermitian matrices satisfying the commutation relations
	\begin{displaymath}
		\alpha_{j}\alpha_{k}+\alpha_{k}\alpha_{j}=2\delta_{jk}I, \hspace{.1in} j, k=1,2,3,4.
	\end{displaymath}
	Here $\alpha_{4}=\beta$, and $I$ is the $4\times 4$ unit matrix.
	
	$D_{e}$ is a self-adjoint operator \cite{Kato1976} and hence $iD_{e}$ is a skew-adjoint operator and is $m$-dissipative.

	(4) Maxwell-Dirac operator
	\begin{displaymath}
		A= \begin{bmatrix}
			D_{e} & 0\\
			0 & A_{W}
		\end{bmatrix}
	\end{displaymath}	
	with $	D_{e}$ the Dirac operator and $A_{W}$ the wave operator defined above. Then $A$ is a self-adjoint operator and $iA$ is skew-adjoint and $m$-dissipative.
	
	(5) Zakharov operator
	\begin{displaymath}	
		A=\begin{bmatrix}
			\Delta & 0\\
			0 & \vert \nabla \vert
		\end{bmatrix}
	\end{displaymath}
	is a self-adjoint operator and hence $iA$ is skew-adjoint and is $m$-dissipative.
\end{Rem}

Let us now recall three well-known theorems that are relevant in characterizing the free propagator semigroup (actually group) generated by the linear part $iA$ and its bounded perturbations that we encounter in this paper \cite{Hille1996}.

\begin{Thm}\cite{Yosida1991}\label{Thm:2.2} (Hille-Yosida-Phillips) A linear operator $A$ is the generator of a contraction semigroup $S(t)$ in X if and only if $A$ is $m$-dissipative with a dense domain $D(A)$.
\end{Thm}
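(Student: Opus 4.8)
The plan is to prove the equivalence in two directions, treating necessity as routine and the construction in sufficiency (via Yosida approximation) as the substantive part. For \textbf{necessity}, suppose $A$ generates the contraction semigroup $S(t)$. Density of $D(A)$ I would obtain from the Ces\`aro means: for $u\in X$ set $u_t=\frac1t\int_0^t S(s)u\,ds$; one checks $u_t\in D(A)$ by evaluating $\frac1h(S(h)-I)u_t$ as $h\to0^+$, and $u_t\to u$ by strong continuity. For dissipativity together with the range condition I would use the Laplace-transform representation of the resolvent: for every $\lambda>0$,
\[
(I-\lambda A)^{-1}f=\frac1\lambda\int_0^\infty e^{-t/\lambda}S(t)f\,dt ,
\]
which is a bounded, everywhere-defined operator. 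The contraction bound $\|S(t)\|\le1$ then gives $\|(I-\lambda A)^{-1}\|\le1$, i.e.\ $\|u\|\le\|u-\lambda Au\|$ for all $u\in D(A)$, and surjectivity of $I-\lambda A$ is immediate from the existence of the resolvent. Hence $A$ is $m$-dissipative with dense domain.

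For \textbf{sufficiency}, assume $A$ is $m$-dissipative with $D(A)$ dense. The range condition makes $J_\lambda:=(I-\lambda A)^{-1}$ everywhere defined, and dissipativity gives $\|J_\lambda\|\le1$ for all $\lambda>0$. I would introduce the Yosida approximants
\[
A_\lambda:=AJ_\lambda=\frac1\lambda\bigl(J_\lambda-I\bigr),
\]
each a bounded operator, so that $e^{tA_\lambda}$ is a uniformly continuous semigroup. Writing $e^{tA_\lambda}=e^{-t/\lambda}\exp\bigl((t/\lambda)J_\lambda\bigr)$ and using $\|J_\lambda\|\le1$ yields $\|e^{tA_\lambda}\|\le1$. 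Two convergence facts are needed: first $J_\lambda u\to u$ as $\lambda\to0^+$ for every $u\in X$, proved for $u\in D(A)$ from $J_\lambda u-u=\lambda J_\lambda Au\to0$ and then extended to all $u$ by density and the uniform bound; consequently $A_\lambda u=J_\lambda Au\to Au$ for each $u\in D(A)$.

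Next I would show that $e^{tA_\lambda}u$ converges as $\lambda\to0^+$. Since the bounded operators $A_\lambda,A_\mu$ commute with each other and with their exponentials, the fundamental theorem of calculus gives
\[
\|e^{tA_\lambda}u-e^{tA_\mu}u\|\le t\,\|A_\lambda u-A_\mu u\|,\qquad u\in D(A),
\]
so $\{e^{tA_\lambda}u\}$ is Cauchy uniformly on compact $t$-intervals; density of $D(A)$ together with the uniform contraction bound extends this to all $u\in X$. I define $S(t)u:=\lim_{\lambda\to0^+}e^{tA_\lambda}u$, and the semigroup law, strong continuity, and the bound $\|S(t)\|\le1$ all pass to the limit.

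The final and most delicate step is to identify the generator $B$ of $S(t)$ with $A$. Passing to the limit in $e^{tA_\lambda}u-u=\int_0^t e^{sA_\lambda}A_\lambda u\,ds$ for $u\in D(A)$ gives $S(t)u-u=\int_0^t S(s)Au\,ds$, whence $u\in D(B)$ and $Bu=Au$, i.e.\ $A\subseteq B$. The hard part will be ruling out a proper extension. Here I would invoke the necessity direction just proved: since $B$ generates a contraction semigroup, $I-\lambda B$ is a bijection of $D(B)$ onto $X$, while $I-\lambda A$ is a bijection of $D(A)$ onto $X$ by the range condition and the injectivity from dissipativity. Given $v\in D(B)$, choose $u\in D(A)$ with $(I-\lambda A)u=(I-\lambda B)v$; since $A\subseteq B$ this reads $(I-\lambda B)u=(I-\lambda B)v$, and injectivity of $I-\lambda B$ forces $u=v$, so $v\in D(A)$. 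Thus $D(B)=D(A)$ and $B=A$, which closes the equivalence.
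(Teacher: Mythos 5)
The paper does not prove this statement at all—it is quoted as a classical result with a citation to Yosida—so there is no internal proof to compare against. Your argument is the standard and correct Hille--Yosida proof (necessity via the Laplace-transform representation of the resolvent and Ces\`aro means for density; sufficiency via Yosida approximants $A_\lambda=\frac1\lambda(J_\lambda-I)$, the commutator-free Cauchy estimate $\|e^{tA_\lambda}u-e^{tA_\mu}u\|\le t\|A_\lambda u-A_\mu u\|$, and the injectivity/surjectivity argument to rule out a proper extension of the generator), matching the cited reference and correctly adapted to the paper's $(I-\lambda A)$ normalization of $m$-dissipativity.
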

If $A$ is a skew-adjoint operator then $S(t)$ can be extendable to a group that is unitary.

\begin{Thm}\label{Thm:2.3} \cite{Kato1976} (Stone's Theorem)  Let ${\mathcal N}$ be a von Neumann algebra and let $\{U_{t}\}_{t\in \R} \subset {\mathcal N}$ be a group of unitary operators that is strongly continuous. Then there is a unique self-adjoint operator $A$ affiliated to ${\mathcal N}$ (i.e., $(A+iI)^{-1}\in {\mathcal N}$), the Stone generator, such that $U_{t}= e^{itA}$.
\end{Thm}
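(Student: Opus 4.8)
The plan is to construct the Stone generator directly from the group as an infinitesimal generator, show it is self-adjoint, recover $U_t$ through the spectral calculus, and finally read off the affiliation to $\mathcal{N}$ from a Laplace-transform formula for the resolvent.

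First I would take $D(A)$ to be the set of $\psi\in X$ for which the limit below exists and define
\begin{displaymath}
  iA\psi := \lim_{t\to 0}\frac{U_t\psi-\psi}{t},\qquad \psi\in D(A).
\end{displaymath}
Density of $D(A)$ follows by smearing: for $f\in C_c^\infty(\mR)$ put $\psi_f=\int_{\mR}f(t)\,U_t\psi\,dt$ (a Bochner integral, well defined by strong continuity and $\|U_t\|=1$); the substitution $s=t+h$ gives $iA\psi_f=-\psi_{f'}$, so $\psi_f\in D(A)$, while choosing $f=f_n$ an approximate identity at $0$ yields $\psi_{f_n}\to\psi$, whence $\overline{D(A)}=X$. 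Symmetry of $A$ (equivalently skew-symmetry of $iA$) comes from unitarity: differentiating $\langle U_t\psi,\phi\rangle=\langle\psi,U_{-t}\phi\rangle$ at $t=0$ for $\psi,\phi\in D(A)$ gives $\langle A\psi,\phi\rangle=\langle\psi,A\phi\rangle$.

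The main obstacle is upgrading symmetry to genuine self-adjointness, and this is exactly where the \emph{group} hypothesis (rather than a mere semigroup) is essential, since it forces the deficiency subspaces $\ker(A^*\mp iI)$ to vanish. If $A^*\phi=i\phi$, set $f(t)=\langle U_t\psi,\phi\rangle$ for $\psi\in D(A)$; using that $U_t$ preserves $D(A)$ one computes
\begin{displaymath}
  f'(t)=\langle iAU_t\psi,\phi\rangle=i\langle U_t\psi,A^*\phi\rangle=i\langle U_t\psi,i\phi\rangle=f(t),
\end{displaymath}
so $f(t)=f(0)e^{t}$; but $|f(t)|\le\|\psi\|\,\|\phi\|$ for \emph{all} $t\in\mR$ by unitarity, which forces $f\equiv0$, hence $\langle\psi,\phi\rangle=0$ for every $\psi\in D(A)$ and thus $\phi=0$. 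The eigenvalue $-i$ case is identical with $f(t)=f(0)e^{-t}$. Therefore $A$ is essentially self-adjoint and its closure, still denoted $A$, is self-adjoint; then $iA$ is skew-adjoint, so by Theorem~\ref{Thm:2.1} it is $m$-dissipative and by Theorem~\ref{Thm:2.2} and the following remark it generates a unitary group, which the spectral theorem realizes as $e^{itA}$. To see $U_t=e^{itA}$ I would fix $\psi\in D(A)$ and differentiate $g(t)=U_{-t}e^{itA}\psi$; since $U_{-t}=e^{-itA}$ commutes with $A$, the terms $-iAU_{-t}e^{itA}\psi$ and $U_{-t}iAe^{itA}\psi$ cancel, so $g'\equiv0$, giving $g(t)\equiv\psi$ and hence $U_t=e^{itA}$ on $D(A)$, and on all of $X$ by density and boundedness.

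Uniqueness of $A$ is then immediate, since any self-adjoint $A'$ with $e^{itA'}=U_t$ has generator $iA$ and the generator determines the operator. For the affiliation claim I would pass to the resolvent: as $U_t$ is a unitary (hence contraction) semigroup for $t\ge0$ with generator $iA$, for $\mathrm{Re}\,s>0$ the integral $\int_0^\infty e^{-st}U_t\,dt$ converges to $(sI-iA)^{-1}$; taking $s=1$ and using $A+iI=i(I-iA)$ gives
\begin{displaymath}
  (A+iI)^{-1}=-i\int_0^\infty e^{-t}\,U_t\,dt.
\end{displaymath}
Each Riemann sum approximating this integral lies in $\mathcal{N}$ and these sums are uniformly bounded; since the integral is their strong-operator limit and a von Neumann algebra is strongly closed on bounded sets, $(A+iI)^{-1}\in\mathcal{N}$, i.e.\ $A$ is affiliated to $\mathcal{N}$, completing the proof.
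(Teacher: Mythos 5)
Your argument is correct and is essentially the classical textbook proof of Stone's theorem, augmented by the affiliation statement; the paper itself gives no proof of Theorem~\ref{Thm:2.3}, citing \cite{Kato1976}, so your write-up supplies what the paper delegates to the reference. All the essential steps are present and sound: density of $D(A)$ by smearing against an approximate identity, symmetry from unitarity, the vanishing of the deficiency spaces $\ker(A^{*}\mp iI)$ via the exponential growth/boundedness contradiction (which is exactly where the two-sided group hypothesis enters), recovery of $U_{t}=e^{itA}$, and the affiliation $(A+iI)^{-1}=-i\int_{0}^{\infty}e^{-t}U_{t}\,dt\in{\mathcal N}$ obtained as a strong limit of uniformly bounded Riemann sums of elements of the strongly closed algebra ${\mathcal N}$ --- this last part matches the paper's (resolvent-based) definition of affiliation precisely. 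One small slip: in the step showing $U_{t}=e^{itA}$ you justify the cancellation in $g'(t)$ by asserting that ``$U_{-t}=e^{-itA}$ commutes with $A$,'' which is circular as written, since that identity is the conclusion you are after. The repair is immediate and already implicit in your earlier work: the group law gives $U_{s}D(A)\subseteq D(A)$ and $AU_{s}\psi=U_{s}A\psi$ for $\psi\in D(A)$ directly from the difference quotients, and that commutation alone makes the two terms in $g'(t)$ cancel. With that rewording the proof is complete.
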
	

\begin{Thm}\cite{Pazy1983} \label{Thm:2.4}
	Let $X$ be a Banach space and let $A$ be the infinitesimal generator of a $C_{0}$ semigroup denoted by $e^{At}$ on $X$ satisfying $\|S(t)\|\leq Me^{\omega t}$. If $B$ is a bounded linear operator on $X$ 	then $A+B$ is the infinitesimal generator of a $C_{0}$ semigroup denoted by $e^{(A+B)t}$ on $X$ satisfying $\|e^{(A+B)t}\|\leq Me^{(\omega +M\|B\|) t}$.
\end{Thm}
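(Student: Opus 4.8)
The plan is to construct the perturbed semigroup directly by a variation-of-constants (Dyson--Phillips) expansion and then identify its generator. Since $B$ is bounded, I expect the natural domain of $A+B$ to be $D(A+B)=D(A)$, so the only real issue is to produce a $C_0$ semigroup $T(t)$ whose generator is exactly $A+B$. I would define $T(t)$ as the solution of the integral equation
$$T(t)x = S(t)x + \int_0^t S(t-s)\,B\,T(s)x\,ds, \qquad x\in X,$$
and solve it by Picard iteration: set $T_0(t)=S(t)$ and $T_{n+1}(t)x=\int_0^t S(t-s)\,B\,T_n(s)x\,ds$, so that the candidate is the series $T(t)=\sum_{n\ge 0}T_n(t)$.

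The first substantive step is the convergence estimate. Using $\|S(t)\|\le Me^{\omega t}$ and the boundedness of $B$, I would prove by induction the bound $\|T_n(t)\|\le Me^{\omega t}\,(M\|B\|t)^n/n!$; summing over $n$ gives absolute convergence in operator norm, uniformly on compact $t$-intervals, and yields precisely $\|T(t)\|\le Me^{\omega t}e^{M\|B\|t}=Me^{(\omega+M\|B\|)t}$, which is the asserted growth bound.

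Next I would verify that $T(t)$ is a $C_0$ semigroup. Strong continuity at $t=0$ follows from that of $S(t)$ together with the series bound, since the tail $\sum_{n\ge 1}T_n(t)x$ is $O(t)$ as $t\to 0$; and the semigroup identity $T(t+r)=T(t)T(r)$ can be obtained from the integral equation together with the uniqueness of its solution, or equivalently by a direct rearrangement of the Dyson--Phillips terms using the semigroup property of $S$.

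The main obstacle, and the heart of the argument, is to show that the generator $G$ of $T(t)$ is exactly $A+B$ with $D(G)=D(A)$. For $x\in D(A)$ I would differentiate the integral equation at $t=0$: the term $S(t)x$ contributes $Ax$, while $\tfrac{1}{t}\int_0^t S(t-s)\,B\,T(s)x\,ds\to Bx$ by strong continuity, giving $Gx=(A+B)x$ and hence $D(A)\subseteq D(G)$. For the reverse inclusion I would pass to the resolvent: for $\operatorname{Re}\lambda$ large enough the Neumann series $(\lambda-A)^{-1}\sum_{n\ge 0}\big(B(\lambda-A)^{-1}\big)^n$ converges in $\mathcal L(X)$, equals $(\lambda-G)^{-1}=\int_0^\infty e^{-\lambda t}T(t)\,dt$, and has range $D(A)$; since $D(G)$ is the range of $(\lambda-G)^{-1}$, this forces $D(G)=D(A)$ and completes the identification. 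A self-contained alternative that bypasses the explicit construction is to verify the Hille--Yosida resolvent bounds for $A+B$ directly from this Neumann expansion, estimating $\|(\lambda-(A+B))^{-m}\|$ for every $m\ge 1$; I expect that power-estimate bookkeeping to be the delicate point, whereas the Dyson--Phillips construction above makes the growth bound $Me^{(\omega+M\|B\|)t}$ transparent.
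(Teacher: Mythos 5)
Your argument is correct and is essentially the standard Dyson--Phillips proof of this classical perturbation theorem; the paper itself offers no proof, simply quoting the result from the cited reference (Pazy), where precisely this variation-of-constants series construction, the inductive bound $\|T_n(t)\|\le Me^{\omega t}(M\|B\|t)^n/n!$, and the resolvent Neumann-series identification of the generator appear. Nothing further is needed.
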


\subsection{Continuity Properties of the Nonlinearity}	
Let us define a class of nonlinearities and their continuity properties that would enable the solvability theorems of deterministic and stochastic type. Leter we will demonstrate that this class covers the five laser generation and interaction models considered in this paper.
\begin{Hyp} \label{Hyp:2.1}
	Let $A$ be a self-adjoint operator on a Hilbert space $H$ and $N$ be a positive integer. Suppose that $J$ is a densely defined nonlinear mapping on $H$ so that $J:D(A^{j})\rightarrow D(A^{j})$ for $0 \leq j\leq N$,:	
	\begin{enumerate}
		\item $\|A^{j}J(\varphi)\|\leq C \left (\|\varphi\|,\cdots, \|A^{j}\varphi\|\right) \|A^{j}\varphi\|,$
		\item $\|A^{j}(J(\varphi)-J(\psi))\|\leq C \left (\|\varphi\|, \|\psi\|,\cdots, \|A^{j}\varphi\|,\|A^{j}\psi\|\right) \|A^{j}(\varphi-\psi)\|,$
	\end{enumerate}
	for $j=0,1,\cdots,N$ and all $\phi, \psi\in D(A^{N})$, with constants $C(\cdot)$ monotonically increasing functions of the norms indicated.
\end{Hyp}	
\subsection{Local and Global Solvability Theory for the Unified Abstract Semilinear Equation}
We first prove solvability results for certain deterministic abstract semilinear evolution in the mild form. This result is needed in treating the $S$-transformed problem that arises in the white noise calculus analysis of the stochastic problem. We consider:
\begin{equation}
	\varphi(t)= e^{i A t}\varphi(0) +i\int_{0}^{t}e^{-i A(t-s)}	 J(\varphi(s)) ds +i \int_{0}^{t}e^{-i A(t-s)}\Theta(\zeta+z\eta)\varphi(s)  ds, \label{eqn:2.1}
\end{equation}
which corresponds to the strong form
\begin{equation}
	\frac{\partial \varphi}{\partial t} = - iA \varphi +J(\varphi)+\Theta(\zeta+z\eta) \varphi , \label{eqn:2.2}
\end{equation}
\begin{equation}
	\varphi(0)=\varphi_{0}. \label{eqn:2.3}
\end{equation}
Here $\Theta( z\zeta+\eta)$ is a holomorphic function in $z\in \mC$ and real analytic in $\zeta, \eta\in E$, where $E$ is a test function space.

\begin{Thm} \label{Thm:2.5}
	Let $A$ be a self-adjoint operator on a Hilbert space $H$ with domain $D(A)\subset H$ and let $J:D(A^{j})\rightarrow D(A^{j})$ for $0\leq j\leq N$, be a nonlinear mapping which satisfies Hypothesis \ref{Hyp:2.1}. Then, given $\varphi_{0}\in D(A^{N})$, there is a $T>0$ and a unique $D(A^{N})$-valued function $\varphi(t)$ on $[0,T)$ which satisfies (\ref{eqn:2.2}), (\ref{eqn:2.3}). For each set of the form $\left \{\varphi\vert \|A^{j}\varphi\|\leq a_{j}, j=0, \cdots, N\right \}$, $T$ can be chosen uniformly for all $\varphi_{0}$ in the set. The solution satisfies the estimate:
	\begin{equation}
		\sup_{t\in [0,T)} \sum_{j=0}^{N}\|A^{j}\varphi(t)\| \leq \sum_{j=0}^{N}	\|A^{j}\varphi_{0}\|e^{t C(T)|z|}, \hspace{.1in} z\in \mC.
	\end{equation}
\end{Thm}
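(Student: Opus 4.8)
The plan is to solve the mild equation \eqref{eqn:2.1} by the contraction mapping principle, working in the Banach space $X_T=C\bigl([0,T];D(A^N)\bigr)$ with norm $\|\varphi\|_{X_T}=\sup_{t\in[0,T]}\sum_{j=0}^N\|A^j\varphi(t)\|$, where $D(A^N)$ carries the graph norm $\sum_{j=0}^N\|A^j\cdot\|$. Let $S(t)=e^{-iAt}$ be the free propagator associated with the linear part of \eqref{eqn:2.2}. Two structural facts drive the whole argument. First, since $A$ is self-adjoint, $iA$ is skew-adjoint and, by Theorems \ref{Thm:2.2}--\ref{Thm:2.3}, $S(t)$ is a unitary group defined through the functional calculus of $A$; consequently $S(t)$ commutes with each $A^j$ on $D(A^j)$ and is an isometry there, so $\|A^jS(t-s)\psi\|=\|A^j\psi\|$ for every $\psi\in D(A^j)$. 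Second, $\Theta(\zeta+z\eta)$ is a scalar, so the potential term is multiplication by a complex number whose modulus is controlled by a constant multiple of $|z|$ through its holomorphy in $z$. Define $\Phi$ on $X_T$ by the right-hand side of \eqref{eqn:2.1}.

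Next I would verify that $\Phi$ maps a closed ball into itself. Fix $\varphi_0\in D(A^N)$, pick $R>\sum_{j=0}^N\|A^j\varphi_0\|$, and let $B_R\subset X_T$ be the closed ball of radius $R$ centered at the origin. For $\varphi\in B_R$ we have $\|A^j\varphi(s)\|\le R$ for all $j,s$, so the monotone constants in Hypothesis \ref{Hyp:2.1} are dominated by a single $C_R$. Applying the isometry fact levelwise and Hypothesis \ref{Hyp:2.1}(1) gives, for each $j$,
\[
\|A^j(\Phi\varphi)(t)\|\le \|A^j\varphi_0\|+\int_0^t C\bigl(\|\varphi(s)\|,\dots,\|A^j\varphi(s)\|\bigr)\,\|A^j\varphi(s)\|\,ds+|\Theta(\zeta+z\eta)|\int_0^t\|A^j\varphi(s)\|\,ds.
\]
Summing over $j$ yields $\|\Phi\varphi\|_{X_T}\le \sum_{j=0}^N\|A^j\varphi_0\|+T\,(C_R+|\Theta|)\,R$, and choosing $T$ small makes this $\le R$, so $\Phi(B_R)\subseteq B_R$. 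The continuity of $s\mapsto J(\varphi(s))$ into $D(A^N)$, needed for the Bochner integrals, follows from Hypothesis \ref{Hyp:2.1}(2) and the continuity of $\varphi$.

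For the contraction property I would estimate $\|\Phi\varphi-\Phi\psi\|_{X_T}$ for $\varphi,\psi\in B_R$ using the same isometry together with Hypothesis \ref{Hyp:2.1}(2): the free-evolution factors drop out, the nonlinear difference is bounded levelwise by $C_R\|A^j(\varphi-\psi)\|$, and the potential difference by $|\Theta|\,\|A^j(\varphi-\psi)\|$, so $\|\Phi\varphi-\Phi\psi\|_{X_T}\le T(C_R+|\Theta|)\|\varphi-\psi\|_{X_T}$. Shrinking $T$ so that $T(C_R+|\Theta|)<1$ makes $\Phi$ a contraction, and the Banach fixed point theorem furnishes a unique $\varphi\in X_T$ solving \eqref{eqn:2.1}; differentiating the mild form recovers \eqref{eqn:2.2}--\eqref{eqn:2.3}. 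Since $C_R$ and the admissible $T$ depend only on $R$, which is governed by the bounds $a_j$, the existence time is uniform over each set $\{\varphi:\|A^j\varphi\|\le a_j,\ j=0,\dots,N\}$.

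Finally, the a priori estimate comes from a Gr\"onwall argument. Setting $y(t)=\sum_{j=0}^N\|A^j\varphi(t)\|$ and feeding Hypothesis \ref{Hyp:2.1}(1) and the isometry into the mild form gives $y(t)\le y(0)+\int_0^t\bigl(C(T)+|\Theta(\zeta+z\eta)|\bigr)y(s)\,ds$; absorbing the nonlinear growth constant and $|\Theta|$---the latter bounded by $C(T)|z|$ via holomorphy in $z$---into a single coefficient $C(T)|z|$ and applying Gr\"onwall's inequality yields $y(t)\le y(0)\,e^{tC(T)|z|}$, the asserted bound. I expect the principal obstacle to be closing the fixed-point and Gr\"onwall estimates simultaneously across all $N+1$ graph norms: the constant in Hypothesis \ref{Hyp:2.1} at level $j$ depends on every lower-order norm $\|A^i\varphi\|$ with $i\le j$, so the levels are coupled, and it is precisely the commutation of the unitary group with $A^j$---which keeps each level's free evolution an isometry and prevents any loss of derivatives---that decouples them enough to use the monotone constants $C_R$ and $C(T)$ uniformly.
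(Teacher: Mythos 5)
Your proposal is correct and follows essentially the same route as the paper's own proof: a contraction-mapping argument for the mild equation in $C([0,T];D(A^{N}))$ equipped with the graph norm $\sum_{j=0}^{N}\|A^{j}\cdot\|$, using Hypothesis \ref{Hyp:2.1} and the boundedness of $\Theta$ and of the propagator to close the estimates levelwise (the paper centers its invariant ball at the free evolution $e^{-iAt}\varphi_{0}$ rather than at the origin, an inessential variant). Your added Gr\"onwall derivation of the exponential bound and the explicit use of the commutation of the unitary group with $A^{j}$ merely fill in details the paper delegates to Reed's references.
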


\begin{proof} This theorem can be proved essentially following \cite{Reed1975, Reed1976} as the extra term $\Theta(\zeta+z\eta) \varphi$ we have in equation (\ref{eqn:2.1}) or (\ref{eqn:2.2}) can be absorbed in to a new definition of $J$ or can be handled separately. We will however sketch some of the essential steps of the proof.
	
	Since $A^{j}$ are closed operators, $D(A^{N})$ is a Banach space under the graph norm $\sum_{j=0}^{N}\|A^{j}\varphi\|$. For each $T>0$, let $X_{T}$ denote the set of continuous $D(A^{N})$-valued function $\varphi(\cdot)$ in the interval $[0,T)$ such that
	\begin{displaymath}
		\|\varphi(\cdot)\|_{X_{T}}:= \sup_{t\in [0,T)} \sum_{j=0}^{N}\|A^{j}\varphi(t)\|<\infty.
	\end{displaymath}
	For a fixed $\epsilon>0$, and given $\varphi_{0}\in D(A^{N})$ let
	\begin{displaymath}
		X_{T,\epsilon, \varphi_{0}}:=\left \{ \varphi(\cdot) \vert \varphi(0)=\varphi_{0},\hspace{.1in} \|\varphi(\cdot) -e^{-iA\cdot }\varphi_{0}\|_{X_{T}}\leq \epsilon \right \}.
	\end{displaymath}
	The goal would be to show that for small enough time $T$ the map
	\begin{displaymath}
		(	{\mathcal J}\varphi)(t)=e^{i A t}\varphi(0) +i\int_{0}^{t}e^{-i A(t-s)}	 J(\varphi(s)) ds +i \int_{0}^{t}e^{-i A(t-s)}\Theta(\zeta+z\eta)\varphi(s)  ds
	\end{displaymath}
	is a contraction on $X_{T,\epsilon, \varphi_{0}}$.
	
	Now using the properties of $J$ given in Hypothesis \ref{Hyp:2.1}, noting that the function $\Theta$ is bounded and the fact that the propagator $e^{-At}$ is a bounded operator in $H$ we can estimate
	\begin{displaymath}
		\|({\mathcal J}\varphi)(t)-e^{-iAt}\varphi_{0}\|\leq C_{\epsilon}T \sup_{t\in [0,T)}\|\varphi(t)\|,
	\end{displaymath}
	and in fact
	\begin{displaymath}
		\|A^{j}({\mathcal J}\varphi)(t)-e^{-iAt}A^{j}\varphi_{0}\|\leq C_{\epsilon}T \sup_{t\in [0,T)}\|A^{j}\varphi(t)\|,\hspace{.1in} 0\leq j\leq N.
	\end{displaymath}
	Summing over $j$ and taking supremum over $t$ we conclude that for small enought $T$, the map ${\mathcal J}$ is a contraction on $X_{T,\epsilon, \varphi_{0}}$ and hence has a unique fixed point $\varphi(\cdot)$ in $X_{T,\epsilon, \varphi_{0}}$. Strong differentiability of $\varphi(t)$ can be proven as in \cite{Reed1975}.
\end{proof}

The proof of Theorem \ref{Thm:2.6} and Theorem \ref{Thm:2.7} below can be essentially as in \cite{Reed1975} as the extra term $\varphi(t) \Theta $ can be either absorbed in to a new definition of $J$ with same estimates or treated separately.

\begin{Thm} \label{Thm:2.6} Suppose that in addition to the Hypothesis \ref{Hyp:2.1}, $J$ satisfies the following property: If $\varphi(t)$ is a $j$ times strongly differentiable $D(A^{N})$-valued function so that $\varphi^{(k)}(t)\in D(A^{N-k})$ and $A^{N-k}\varphi^{(k)}(t)$ is continuous for all $k\leq j$, then $J(\varphi(t))$ is $j$ times continuously differentiable, $(\frac{d}{dt})^{j}J(\varphi(t))\in D(A^{N-j-1})$ and $A^{N-j-1}(\frac{d}{dt})^{j}J(\varphi(t))$ is continuous. If $J$ satisfies this property then the solution of Theorem \ref{Thm:2.5} is $N$ times strongly continuously differentiable and $\varphi^{(j)}(t)\in D(A^{N-j})$ for all $j\leq N$.
\end{Thm}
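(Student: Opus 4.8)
The plan is to argue by induction on the order of differentiation, working with the \emph{strong} equation (\ref{eqn:2.2}) rather than the mild form (\ref{eqn:2.1}), and feeding the regularity of $\varphi$ established at each stage back into the differentiability hypothesis imposed on $J$. As the remark following Theorem \ref{Thm:2.5} permits, I would first absorb the linear term by setting $\tilde J(\varphi):=J(\varphi)+\Theta(\zeta+z\eta)\varphi$; since $\Theta(\zeta+z\eta)$ is a bounded scalar factor commuting with every power of $A$, the map $\tilde J$ inherits Hypothesis \ref{Hyp:2.1} and the differentiability-propagation property assumed here. This reduces matters to $\varphi'(t)=-iA\varphi(t)+i\tilde J(\varphi(t))$, which holds in the strong sense by Theorem \ref{Thm:2.5}.

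Concretely, I would prove by induction on $m$, for $1\le m\le N$, the statement $C(m)$: that $\varphi$ is $m$ times strongly continuously differentiable, that $\varphi^{(k)}(t)\in D(A^{N-k})$ with $A^{N-k}\varphi^{(k)}$ continuous for $0\le k\le m$, and that
\[
\varphi^{(m)}(t)=-iA\,\varphi^{(m-1)}(t)+i\Big(\tfrac{d}{dt}\Big)^{m-1}\tilde J(\varphi(t)).
\]
The base case $C(1)$ is the strong form (\ref{eqn:2.2}) together with the observation that $A\varphi(t)\in D(A^{N-1})$ while $\tilde J(\varphi(t))\in D(A^{N})$, so $\varphi'(t)\in D(A^{N-1})$ with $A^{N-1}\varphi'$ continuous. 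For the step $C(m)\Rightarrow C(m+1)$, with $m\le N-1$, I would differentiate the displayed equation for $\varphi^{(m)}$ term by term. The term $(\tfrac{d}{dt})^{m-1}\tilde J(\varphi)$ is handled directly by the hypothesis on $J$: its premise at order $j=m$ is \emph{precisely} the regularity recorded in $C(m)$, and its conclusion yields that $\tilde J(\varphi)$ is $m$ times continuously differentiable with $(\tfrac{d}{dt})^{m}\tilde J(\varphi(t))\in D(A^{N-m-1})$ and $A^{N-m-1}(\tfrac{d}{dt})^{m}\tilde J(\varphi)$ continuous. The term $\Theta(\zeta+z\eta)\varphi^{(m-1)}$ (if not already absorbed) differentiates trivially since $\varphi^{(m-1)}$ is $C^1$ and $\Theta$ is a bounded scalar.

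The one genuinely analytic point, and the step I expect to be the main obstacle, is differentiating $A\varphi^{(m-1)}(t)$, i.e. justifying that $\tfrac{d}{dt}\,A\varphi^{(m-1)}(t)=A\varphi^{(m)}(t)$, which amounts to interchanging the closed unbounded operator $A$ with $\tfrac{d}{dt}$. I would isolate this as a lemma: if $u\in C^1([0,T);H)$ with $u(t)\in D(A)$ for all $t$ and $u'(t)\in D(A)$ with $Au'$ continuous, then $Au\in C^1$ and $(Au)'=Au'$. This follows by writing $u(t+h)-u(t)=\int_t^{t+h}u'(\sigma)\,d\sigma$, using the closedness of $A$ to pull it inside the Bochner integral, and applying the fundamental theorem of calculus to the continuous integrand $Au'$. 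Applying the lemma with $u=\varphi^{(m-1)}$ requires exactly one spare power of $A$ on $\varphi^{(m)}$, namely $\varphi^{(m)}(t)\in D(A)$ with $A\varphi^{(m)}$ continuous, which is available precisely because $N-m\ge 1$ for $m\le N-1$; this is where the ladder of domains $D(A^{N-k})$ is consumed. Combining the three differentiated terms produces the equation in $C(m+1)$ together with $\varphi^{(m+1)}(t)\in D(A^{N-m-1})=D(A^{N-(m+1)})$ and $A^{N-(m+1)}\varphi^{(m+1)}$ continuous. The bookkeeping closes because each differentiation spends exactly one of the $N$ available powers of $A$, so the top derivative $\varphi^{(N)}$ lands continuously in $D(A^{0})=H$ and no further smoothing is needed; the induction therefore terminates at $m=N$, which is the assertion of the theorem.
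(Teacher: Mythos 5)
Your proposal is correct and follows essentially the route the paper intends: the paper offers no proof of Theorem \ref{Thm:2.6} beyond deferring to \cite{Reed1975} after remarking that the term $\Theta(\zeta+z\eta)\varphi$ can be absorbed into $J$, and your induction on the order of differentiation --- absorbing $\Theta$ into $\tilde J$, invoking the assumed propagation property of $J$ at order $j=m$, using closedness of $A$ to interchange it with $\tfrac{d}{dt}$, and reading off the regularity of $\varphi^{(m+1)}$ from the differentiated equation --- is exactly Reed's argument with the details the paper omits. The one point you pass over silently is that your interchange lemma needs $A\varphi^{(m)}$ continuous while the induction hypothesis only records continuity of $\varphi^{(m)}$ and of $A^{N-m}\varphi^{(m)}$; for $N-m>1$ this follows from the moment inequality $\|Au\|\leq \|u\|^{1-1/k}\|A^{k}u\|^{1/k}$ for the self-adjoint operator $A$ applied to differences, which is standard and closes the gap.
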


\begin{Thm} \label{Thm:2.7} If instead of the estimate (2) of Hypothesis  \ref{Hyp:2.1} we have the estimate for the nonlinearity:
	\begin{displaymath}
		\|A^{j}J(\varphi)\|\leq C\left (\|\varphi\|, \cdots, \|A^{j-1}\varphi\|\right ) \|A^{j}\varphi\|,	\hspace{.1in} 1\leq j\leq N.
	\end{displaymath}
	Suppose that on any finite interval $[0,T)$ where the strong solution $\varphi(t)$ of equation \ref{eqn:2.1} exists, $\|\varphi(t)\|$ is bounded. Then the strong solution exists globally in $t$ and $\varphi(t)\in D(A^{N})$ for all $t$.
\end{Thm}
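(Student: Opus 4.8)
The plan is to deduce global existence from an \emph{a priori} bound on the full graph norm $\sum_{j=0}^N \|A^j\varphi(t)\|$, exploiting the fact that the local existence time furnished by Theorem~\ref{Thm:2.5} depends only on this norm. First I would record the continuation criterion implicit in Theorem~\ref{Thm:2.5}: since $T$ may be chosen uniformly over any set $\{\varphi : \|A^j\varphi\| \le a_j,\ 0\le j\le N\}$, the maximal solution on $[0,T_{\max})$ either exists globally or satisfies $\limsup_{t\to T_{\max}^-}\sum_{j=0}^N\|A^j\varphi(t)\| = \infty$. It therefore suffices to show that on every finite interval on which the solution exists the graph norm remains bounded; this rules out the second alternative and forces $T_{\max}=\infty$.

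The a priori bound I would obtain by induction on $j$, climbing the hierarchy of norms one order at a time. The base case $j=0$ is precisely the standing assumption that $\|\varphi(t)\|$ stays bounded on any finite existence interval. For the inductive step, assume $\|A^i\varphi(t)\|$ is bounded on $[0,T)$ by a constant $M_i$ for all $i\le j-1$. Applying $A^j$ to the mild equation~\eqref{eqn:2.1} and using that $A^j$ commutes with the propagator $e^{\mp iAt}$ (spectral calculus for the self-adjoint $A$) while the propagator is unitary, I would get
\begin{displaymath}
\|A^j\varphi(t)\| \le \|A^j\varphi_0\| + \int_0^t \|A^j J(\varphi(s))\|\,ds + \int_0^t |\Theta(\zeta+z\eta)|\,\|A^j\varphi(s)\|\,ds.
\end{displaymath}
The decisive point is the modified estimate: the coefficient $C(\|\varphi\|,\dots,\|A^{j-1}\varphi\|)$ multiplying $\|A^j\varphi\|$ now involves only lower-order norms, which are already controlled by the inductive hypothesis. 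Since $C$ is monotone, $C(\|\varphi(s)\|,\dots,\|A^{j-1}\varphi(s)\|)\le C(M_0,\dots,M_{j-1})=:\widetilde C_{j-1}$, a constant; combining this with the boundedness of $\Theta$ recorded in the proof of Theorem~\ref{Thm:2.5}, say $|\Theta(\zeta+z\eta)|\le C_\Theta$, yields the \emph{linear} integral inequality $\|A^j\varphi(t)\| \le \|A^j\varphi_0\| + (\widetilde C_{j-1}+C_\Theta)\int_0^t \|A^j\varphi(s)\|\,ds$. Gronwall's lemma then gives $\|A^j\varphi(t)\| \le \|A^j\varphi_0\|\,e^{(\widetilde C_{j-1}+C_\Theta)t}$, which is bounded on $[0,T)$. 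This closes the induction, so all $N+1$ norms remain finite on every bounded time interval and the membership $\varphi(t)\in D(A^N)$ is preserved.

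The step I expect to be the crux is recognizing why the \emph{weakened} hypothesis on the nonlinearity is exactly what makes the argument work: replacing estimate~(2) of Hypothesis~\ref{Hyp:2.1} by one in which the coefficient of $\|A^j\varphi\|$ depends only on $\|\varphi\|,\dots,\|A^{j-1}\varphi\|$ makes the top-order inequality linear in $\|A^j\varphi\|$, so Gronwall closes with no smallness or finite-time restriction. With the original estimate the coefficient would itself involve $\|A^j\varphi\|$, producing a superlinear inequality controllable only locally, and the global conclusion would fail in general. The remaining ingredients — commuting $A^j$ through the Duhamel integral, legitimate because $J:D(A^j)\to D(A^j)$ by Hypothesis~\ref{Hyp:2.1}, and the unitarity of $e^{\mp iAt}$ — are routine. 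Feeding the graph-norm bound back into the continuation criterion yields $T_{\max}=\infty$ and hence global existence.
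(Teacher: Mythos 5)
Your proposal is correct and follows essentially the same route as the paper's proof: the continuation criterion from the uniform local existence time of Theorem~\ref{Thm:2.5}, followed by an order-by-order Gronwall argument in which the weakened estimate makes the coefficient of $\|A^j\varphi\|$ depend only on the already-controlled lower-order norms. Your version merely phrases the paper's ``continuing in this manner'' as a formal induction and separates the bounded $\Theta$ contribution explicitly, which are cosmetic differences.
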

\begin{proof}
	
	The proof of this theorem is again essentially same as \cite{Reed1975} despite of the extra term in our case. We repeat this proof here since the steps are also used in the stochastic case later.
	
	Let $[0,T_{0})$ be the maximal interval on which a strongly differentiable solution of (\ref{eqn:2.1}) with values in $D(A^{N})$ exists. If $T_{0}<\infty$ then $\sum_{j=0}^{N}\|A^{j}\varphi(t)\|$ must go to infinity as $t \uparrow T_{0}$. Otherwise we could continue the solution across $T_{0}$ by using the fact that the $T$ in Theorem \ref{Thm:2.5} can be chosen uniformly for $\|A^{j}\varphi_{0}\|$  in bounded sets. Thus, to conclude global existence we need only show that $\sum_{j=0}^{N}\|A^{j}\varphi(t)\|$ is bounded on any finite interval $[0,T)$ where the solution exists. 
	
	By hypothesis $\|\varphi(t)\|$ is bounded on $[0,T)$. Thus,
	\begin{displaymath}
		C_{1}=\sup_{t\in [0,T)}C(\|\varphi(t)\|)<\infty,
	\end{displaymath}
	therefore since $\varphi$ satisfies $\varphi(t)=({\mathcal J}\varphi)(t)$ we have
	\begin{displaymath}
		\|A\varphi(t)\|\leq \|A\varphi_{0}\| + \int_{0}^{t}\|AJ(\varphi(s))\|ds +\int_{0}^{t}\|A\Theta \varphi(s)\|ds
	\end{displaymath}
	\begin{displaymath}
		\leq \|A\varphi_{0}\| + C_{1} \int_{0}^{t}\|A\varphi(s)\|ds. 
	\end{displaymath}
	Thus by iteration $\|A\varphi(t)\|\leq \|A\varphi_{0}\| e^{C_{1}t}$ so that $\|A\varphi(t)\|$ is bounded on $[0,T)$. 
	
	Similarly, defining 
	\begin{displaymath}
		C_{2}=\sup_{t\in [0,T)}C(\|\varphi(t)\|, \|A\varphi(t)\|)<\infty,
	\end{displaymath}
	we obtain
	\begin{displaymath} 
		\|A^{2}\varphi(t)\|\leq \|A^{2}\varphi_{0}\| + \int_{0}^{t}\|A^{2}J(\varphi(s))\|ds +\int_{0}^{t}\|A^{2}\Theta \varphi(s)\|ds
	\end{displaymath}
	\begin{displaymath}
		\leq \|A^{2}\varphi_{0}\| + C_{2} \int_{0}^{t}\|A^{2}\varphi(s)\|ds. 
	\end{displaymath}
	Since $\|\varphi(t)\|$ and $\|A\varphi(t)\|$ are bounded on $[0,T)$, and hence $C_{2}<\infty$, we conclude that $\|A^{2}\varphi(t)\|$ is bounded on $[0,T)$.
	
	Continuing this manner we can conclude that $\|A^{j}\varphi(t)\|$ is bounded on $[0,T)$ for each $j\leq N$.
\end{proof}

\begin{Thm} \label{Thm:2.8}
	Let $A$ and $J$ satisfy the Hypothesis \ref{Hyp:2.1} for all $j$. Suppose that $J$ satisfies the hypothesis of Theorem \ref{Thm:2.7} for all $N$ that any strong solution of \ref{eqn:2.1} is a priori bounded on any finite interval. If $\varphi_{0}\in \bigcap_{j=1}^{\infty}D(A^{j})$, then the solution $\varphi(t)$ of \ref{eqn:2.1} is infinitely strongly differentiable in $t$ and each time derivative has values in $\bigcap_{j=1}^{\infty}D(A^{j})$.
\end{Thm}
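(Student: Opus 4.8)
\emph{Proof proposal.} The plan is to obtain the result as a bootstrap, running the finite-order theorems already established for every $N$ and then intersecting over $N$; no new fixed-point argument is needed. First I would fix an arbitrary positive integer $N$. Since $\varphi_0\in\bigcap_{j=1}^{\infty}D(A^{j})$ lies in particular in $D(A^{N})$, Theorem \ref{Thm:2.5} produces a local $D(A^{N})$-valued solution, and the a priori boundedness assumption together with the estimate of Theorem \ref{Thm:2.7} upgrades this to a global $D(A^{N})$-valued solution $\varphi_{N}(t)$, with $\varphi_{N}(t)\in D(A^{N})$ for every $t$.

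The first substantive point is consistency across levels: the solutions $\varphi_{N}$ built for different $N$ must be one and the same function. For $M\le N$ the graph norm of $D(A^{N})$ dominates that of $D(A^{M})$, so $\varphi_{N}$ is in particular a continuous $D(A^{M})$-valued function solving the mild equation \eqref{eqn:2.1}; by the uniqueness clause of Theorem \ref{Thm:2.5} applied at level $M$ it coincides with $\varphi_{M}$. Hence there is a single $H$-valued function $\varphi(t)$, independent of $N$, with $\varphi(t)\in D(A^{N})$ for every $N$ and every $t$. Since membership in $\bigcap_{j=1}^{\infty}D(A^{j})$ is exactly membership in each $D(A^{N})$, we conclude $\varphi(t)\in\bigcap_{j=1}^{\infty}D(A^{j})$ for all $t$, which is the spatial regularity claimed.

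For the temporal regularity I would invoke Theorem \ref{Thm:2.6} at each order. Under the differentiability property assumed there, here taken to hold for all $N$, the level-$N$ solution is $N$ times strongly continuously differentiable with $\varphi^{(j)}(t)\in D(A^{N-j})$ for all $j\le N$. Because $\varphi_{N}=\varphi$ by the previous step and $N$ is arbitrary, $\varphi$ is $N$ times continuously differentiable for every $N$, i.e.\ infinitely strongly differentiable. Moreover, fixing $j$ and ranging over integers $N\ge j$, the inclusion $\varphi^{(j)}(t)\in D(A^{N-j})$ holds for every such $N$, so $\varphi^{(j)}(t)\in\bigcap_{m=0}^{\infty}D(A^{m})=\bigcap_{j=1}^{\infty}D(A^{j})$, giving that each time derivative takes values in the intersection.

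The only place requiring genuine care is the consistency step: one must be sure that the several solutions manufactured at different regularity levels are restrictions of a single object, so that ``for all $N$'' can be promoted to the infinite intersection without any limiting or convergence argument. This is purely a uniqueness observation and carries no analytic difficulty, provided the differentiability hypothesis of Theorem \ref{Thm:2.6} and the a priori bound of Theorem \ref{Thm:2.7} are indeed available at every order; the induction then closes at once. I expect no serious obstacle beyond bookkeeping the graph-norm inclusions $D(A^{N})\hookrightarrow D(A^{M})$ for $M\le N$ and the monotonicity of the constants in Hypothesis \ref{Hyp:2.1}.
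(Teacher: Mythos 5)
Your proposal is correct and is essentially the argument the paper intends: the paper states Theorem \ref{Thm:2.8} without proof, as a direct consequence of running Theorems \ref{Thm:2.5}--\ref{Thm:2.7} at every order $N$ and intersecting, exactly as you do, with the consistency of the level-$N$ solutions secured by the uniqueness clause of Theorem \ref{Thm:2.5}. The one point worth flagging (and you do flag it) is that the infinite differentiability in $t$ genuinely requires the extra smoothness hypothesis on $J$ from Theorem \ref{Thm:2.6} --- Hypothesis \ref{Hyp:2.1} alone only yields Lipschitz continuity of $t\mapsto J(\varphi(t))$, hence only $C^{1}$ regularity --- so that hypothesis must be read into the statement of Theorem \ref{Thm:2.8} even though it is not written there explicitly.
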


We remark here that with further restrictions on $J$, we can obtain a local solvability theorem similar to Theorem \ref{Thm:2.5} for operator valued solution $\varphi(t)\in {\mathcal L}(H)$ strongly continuous in $t$, if $\varphi(0)\in {\mathcal L}(H)$ as in solvability theorems for infinite dimensional Riccati operator equations with unbounded operator coefficients. See Lemma 5.3.3 and Theorem 6.4.2 of \cite{Meyer1973} and also numerous other references cited later in the paper on infinite dimensional Riccati equations. We will explore and expand upon these solvability theorems for operator valued evolutions in future papers \cite{Sritharan2025a, Sritharan2025b}.

Recalling the definitions of Banach and operator valued holomorphic functions \cite{Hille1996}, we have the following results:
\begin{Thm}
	The maps $z\rightarrow \varphi(t,z)$ from $\mC \rightarrow H$ and as an operator valued map $z\rightarrow \varphi(t,z)$ from $\mC \rightarrow {\mathcal L}(H,H)$ are holomorphic and entire.
\end{Thm}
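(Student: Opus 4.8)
The plan is to realize $\varphi(t,z)$ as the locally uniform (in $z$) limit of the Picard iterates already used in the proof of Theorem \ref{Thm:2.5}, and then to transfer holomorphy from the iterates to the limit. The only explicit dependence of (\ref{eqn:2.1}) on $z$ enters through the factor $\Theta(\zeta+z\eta)$, which by hypothesis is entire in $z$; neither the propagator $e^{-iA(t-s)}$ nor the nonlinearity $J$ carries any $z$. I would begin from the $z$-independent function $t\mapsto e^{-iA\cdot}\varphi_0$, set $\varphi_{n+1}=\mathcal{J}\varphi_n$ with $\mathcal{J}$ the contraction of Theorem \ref{Thm:2.5}, and prove by induction that each $z\mapsto\varphi_n(t,z)$ is an entire $D(A^N)$-valued map.

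For the inductive step I would invoke three facts: (a) $z\mapsto\Theta(\zeta+z\eta)$ is entire; (b) a Bochner integral $\int_0^t e^{-iA(t-s)}F(s,z)\,ds$ whose integrand is holomorphic in $z$ and uniformly bounded on compacta is again holomorphic in $z$, the interchange of integration with the Cauchy integral formula being justified by dominated convergence through the estimate of Theorem \ref{Thm:2.5}; and (c) the composition $J(\varphi_n(s,z))$ is holomorphic provided $J$ is itself a holomorphic mapping $D(A^N)\to D(A^N)$, which is the case for the Wick-quantized polynomial nonlinearities occurring in the five models. Since the contraction estimate of Theorem \ref{Thm:2.5} is uniform for $z$ in any disk $\{|z|\le R\}$, the iterates converge uniformly on $[0,T)\times\{|z|\le R\}$.

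Holomorphy then passes to the limit by the vector-valued Weierstrass/Morera theorem: a locally uniform limit of $H$-valued holomorphic functions is holomorphic, which one checks either by passing to the limit in $\oint_\gamma\varphi_n(t,z)\,dz=0$ over triangles, or equivalently by verifying weak holomorphy together with the norm local boundedness already supplied by the estimate of Theorem \ref{Thm:2.5} (Dunford's theorem). Because the bound $e^{tC(T)|z|}$ is finite for every $z\in\mC$, the solution is defined on all of $\mC$, and since $R$ is arbitrary the limit is entire. The operator-valued statement follows from the same argument applied to the operator-valued version of (\ref{eqn:2.1}) noted in the remark preceding this theorem, with holomorphy taken in the operator-norm topology and weak holomorphy verified against the functionals $z\mapsto\langle\varphi(t,z)x,y\rangle$ for $x,y\in H$.

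The main obstacle I expect is precisely the holomorphy of the nonlinear term $J(\varphi_n(s,z))$: were $J$ merely real-Lipschitz in the sense allowed by Hypothesis \ref{Hyp:2.1}, the composition would generally fail to be complex-differentiable and $\varphi$ would only be real-analytic in $z$. Thus the crux is to record that the nonlinearities arising after Wick quantization are genuinely holomorphic, so that the Fr\'echet derivative $DJ$ is complex-linear; granting this, the formal $z$-derivative $\partial_z\varphi$ solves the linearized mild equation obtained by differentiating (\ref{eqn:2.1}) in $z$, whose solvability by the same contraction and estimates confirms complex differentiability and closes the argument.
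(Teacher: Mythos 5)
Your proposal is correct in substance but proceeds by a genuinely different route from the paper. The paper does not iterate: it invokes the Hille--Phillips criterion (weak holomorphy implies strong holomorphy for vector- and operator-valued functions, Definition 3.10.1 of the cited reference), writes down the variational equation satisfied by the candidate derivative $\Phi=\partial_z\varphi$ --- a \emph{linear} nonhomogeneous evolution with bounded coefficient $J'(\varphi(t))+\Theta(\zeta+z\eta)$ and bounded forcing $\partial_z\Theta(\zeta+z\eta)\varphi(t)$ --- and solves it by the bounded-perturbation result of Theorem \ref{Thm:2.4}. You instead carry holomorphy through the Picard iterates of Theorem \ref{Thm:2.5} and pass to the limit by the vector-valued Weierstrass/Morera theorem, relegating the linearized equation to a closing confirmation. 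Your route is more self-contained on one point where the paper is terse: solving the linearized equation produces a candidate $\Phi$ but does not by itself show that the difference quotients $(\varphi(t,z+h)-\varphi(t,z))/h$ converge to it, whereas locally uniform convergence of holomorphic iterates delivers holomorphy of the limit directly. The paper's route is shorter and avoids your step (c), the holomorphy of $z\mapsto J(\varphi_n(s,z))$ at every stage of the iteration, at the cost of tacitly assuming the existence of a complex-linear Fr\'echet derivative $J'$. You are right to flag that point as the crux: Hypothesis \ref{Hyp:2.1} only gives Lipschitz-type bounds, under which one would obtain at best real analyticity, and both proofs silently rely on the concrete polynomial (or entire, as for $\sin$) structure of the nonlinearities in the five models to make $DJ$ complex-linear. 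Making that assumption explicit, as you do, is an improvement over the paper's presentation; with it granted, either argument closes.
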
	

{\bf Proof}: According to \cite{Hille1996}, Definition 3.10.1, need to show that $\varphi$ is weakly differentiable with respect to $z$: for the Hilbert space valued case (that is $\varphi(z)\in H$), one only needs to prove differentiability of $\langle \varphi(z),v\rangle, v\in H$ and in the operator valued case (that is $\Phi(z)\in {\mathcal L}(H)$), need to prove the differentiability of $\langle \Phi(z) u, v\rangle, u,v\in H$.  Hence we consider the evolution equation corresponding to the Gateaux derivative of $\varphi$ with respect to $z$ which we denote as $\partial_{z}\varphi:=\Phi$:
\begin{displaymath}
	\Phi(t)= i\int_{0}^{t}e^{-i A(t-s)} \left \{	 J'(\varphi(s)) +\Theta(\zeta+z\eta)\right \} \Phi(s)ds  
\end{displaymath}
\begin{displaymath}
 + i\int_{0}^{t}e^{-i A(t-s)}\partial_{z}\Theta(\zeta+z\eta)\varphi(s) ds, 
\end{displaymath}
which corresponds to the following linear partial differential equation (operator equation in Hilbert space):
\begin{equation}
	\frac{\partial \Phi}{\partial t} = \left\{- iA + J'(\varphi(t)) +\Theta(\zeta+z\eta)  \right \}\Phi +\partial_{z}\Theta(\zeta+z\eta)\varphi(t) , 
\end{equation}
\begin{equation}
	\Phi(0)=0. 
\end{equation}
Note that given the bounded function $ J'(\varphi(t)) +\Theta(\zeta+z\eta)$, by Theorem \ref{Thm:2.4}, $- iA + J'(\varphi(t)) +\Theta(\zeta+z\eta)$ generates a $C_{0}$ semigroup in $H$. This then by Theorem \ref{Thm:2.4} allows us to conclude that the non-homogeneous linear evolution (\ref{eqn:2.2})-(\ref{eqn:2.3}) with bounded forcing term $\varphi(t) \partial_{z}\Theta(\zeta+z\eta) $ has a unique bounded solution $\Phi\in C_{b}([0,T];H)$ by standard semigroup theory \cite{Kato1976a, Pazy1983}. In the case of $\Phi(t)$ being interpreted as an operator valued evolution with values in ${\mathcal L}(H)$ we refer to \cite{Temam1971, DaPrato1972, Tartar1974} for solvability and will explore further details in our forthcoming papers \cite{Sritharan2025a, Sritharan2025b}.

\section{Unified Abstract Nonlinear Wave Equation-II: Stochastic Medium}
\subsection{It\^o Calculus Method}
In this section we will treat the mild solution form \ref{eqn:1.2} using It\^o calculus method combined with method of stochastic semilinear evolution in the spirit of \cite{DaPrato1996, DaPrato2014, Mohan2017}. Let $(\Omega, \Sigma, \Sigma_{t},m)$ be a complete filtered probability space. Let $W(t)$ be an $H$-valued Wiener process with symmetric trace-class covariance operator $Q\in {\mathcal L}(H,H)$:
\begin{displaymath}
	{\mathcal E}[(W(t),\psi)(W(\tau),\phi)]=t\wedge \tau (Q\psi,\phi),\hspace{.1in} \forall \phi,\psi\in H,
\end{displaymath}
and $\mbox{ Tr}Q <+\infty$ (finite trace). Using the eigensystem $\{ \lambda_{i}, e_{i}\}_{i=1}^{\infty}, e_{i}\in H, i\geq 1$ of $Q$ we can also express the infinite dimensional Wiener process $W(t)$ as a series expansion \cite{DaPrato2014}:
\begin{displaymath}
	W(t)=\sum_{i=1}^{\infty}\sqrt{\lambda_{i}}e_{i}\beta_{i}(t) \in H, \hspace{.1in} t\geq 0,
\end{displaymath}
where $\beta_{i}$ are standard independent Brownian motions and $\mbox{ Tr}Q=\sum_{i=1}^{\infty} \lambda_{i} <+\infty.$ The main theorem in this section is proven using techniques similar to the paper \cite{Mohan2017}.

Let us now define a stopping time $\tau_{\Lambda}$ as:
\begin{displaymath}
	\tau_{\Lambda}:=\inf\left \{ t>0; \sup_{0\leq j\leq N-1}\|A^{j}\varphi(t)\| >\Lambda \right\}.
\end{displaymath}

\begin{Thm} \label{Thm:3.1}
	Suppose $A$ and $J$ satisfy the Hypothesis \ref{Hyp:2.1} and the hypotheses of Theorem \ref{Thm:2.7}. Then there is a stopping time $\tau_{\Lambda}(\omega)>0$ such that there is a unique $\Sigma_{t}$-adapted $H$-valued stochastic process $\varphi(t,\omega), t\in (0,\tau_{\Lambda}), \omega \in \Omega$ with bound
	\begin{displaymath}
		{\mathcal E} [	\sup_{t\in (0,\tau_{\Lambda})}\sum_{j=0}^{N}\|A^{j}\varphi(t)\|^{2}]\leq C(T, Q){\mathcal E}[\sum_{j=0}^{N} \|A^{j}\varphi_{0}\|^{2} ], 
	\end{displaymath}
	that satisfies 
	\begin{equation}
		\varphi(t)= e^{-i A t}\varphi_{0} +\int_{0}^{t}e^{-i A(t-s)}	 J(\varphi(s)) ds+\int_{0}^{t}e^{-i A(t-s)}\varphi(s) dW(s), \hspace{.1in} 0\leq t\leq \tau_{\Lambda},
	\end{equation}
	\begin{displaymath}
		\varphi(0)=\varphi_{0}\in H,
	\end{displaymath}
	with probability one in the time interval $(0,\tau_{\Lambda})$ and for a given $0 <\rho <1$ 
	\begin{displaymath}
		m\left \{\omega: \tau_{\Lambda}(\omega) >\rho \right\}\geq 1-\rho^{2}M,
	\end{displaymath}
	with $M$ depending on $\varphi_{0}, Q$ and independent of $\rho$. 
\end{Thm}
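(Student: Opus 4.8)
The plan is to follow the localization-plus-fixed-point scheme of \cite{Mohan2017, DaPrato2014}, exploiting at every turn that $A$ is self-adjoint, so that $\{e^{-iAt}\}_{t\in\mR}$ is a \emph{unitary group}: it is isometric on $H$, it commutes with each $A^{j}$ on $D(A^{j})$, and it obeys the group law $e^{-iA(t-s)}=e^{-iAt}e^{iAs}$. First I would fix $T>0$ and work in the Banach space of $\Sigma_{t}$-adapted, continuous $D(A^{N})$-valued processes normed by $\|\varphi\|_{T}^{2}:=\mathcal{E}[\sup_{t\in[0,T]}\sum_{j=0}^{N}\|A^{j}\varphi(t)\|^{2}]$. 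Because the constant controlling the top-order term in the estimate of Theorem \ref{Thm:2.7} depends only on the lower norms $\|\varphi\|,\dots,\|A^{N-1}\varphi\|$, I would truncate the coefficients by a smooth cutoff $\chi(\sup_{0\le j\le N-1}\|A^{j}\varphi\|/\Lambda)$; on the truncated problem the effective constants collapse to a single $C(\Lambda)$, rendering $J$ (and the linear noise coefficient) globally Lipschitz in the graph norm. The stopping time $\tau_{\Lambda}$ is then the first exit of $\sup_{0\le j\le N-1}\|A^{j}\varphi\|$ from $[0,\Lambda)$, and the truncated solution coincides with the genuine one on $[0,\tau_{\Lambda})$.

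The core estimates are carried out termwise after applying $A^{j}$, $0\le j\le N$, to the mild equation. For the Duhamel term, isometry of the group gives $\|A^{j}\int_{0}^{t}e^{-iA(t-s)}J(\varphi(s))\,ds\|\le\int_{0}^{t}\|A^{j}J(\varphi(s))\|\,ds$, which Hypothesis \ref{Hyp:2.1} bounds by $C(\Lambda)\int_{0}^{t}\|A^{j}\varphi(s)\|\,ds$. The stochastic term is where the group law pays off: writing $\int_{0}^{t}e^{-iA(t-s)}\varphi(s)\,dW(s)=e^{-iAt}\int_{0}^{t}e^{iAs}\varphi(s)\,dW(s)$ turns the \emph{a priori} non-martingale stochastic convolution into the unitary image of a genuine $H$-valued martingale, so Doob's $L^{2}$-inequality and the It\^o isometry apply directly. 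Using $A^{j}e^{-iAt}=e^{-iAt}A^{j}$ and unitarity once more, I would obtain $\mathcal{E}[\sup_{t\le T}\|A^{j}\!\int_{0}^{t}e^{-iA(t-s)}\varphi\,dW\|^{2}]\le 4\,\mathcal{E}\int_{0}^{T}\|A^{j}(\varphi(s)\,\cdot\,)\,Q^{1/2}\|_{\mathrm{HS}}^{2}\,ds$, where $(\varphi(s)\,\cdot\,)$ denotes the multiplication operator $\psi\mapsto\varphi(s)\psi$. Summing over $j$, invoking $\tr Q<\infty$, and feeding the result into Gronwall's lemma yields both the contraction (on small $T$, or by the standard Picard argument on all of $[0,T]$ after truncation) and the \emph{a priori} bound $\mathcal{E}[\sup_{t}\sum_{j}\|A^{j}\varphi\|^{2}]\le C(T,Q)\,\mathcal{E}[\sum_{j}\|A^{j}\varphi_{0}\|^{2}]$.

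The main obstacle is the top-order ($j=N$) control of the multiplicative noise: one must commute $A^{N}$ past the multiplication operator and bound $\|A^{N}(\varphi(s)\,\cdot\,)Q^{1/2}\|_{\mathrm{HS}}$ by $C(\Lambda)\|A^{N}\varphi(s)\|$ plus lower-order terms. This Leibniz-type product estimate is exactly where the localization is indispensable: the cutoff keeps $\sup_{j\le N-1}\|A^{j}\varphi\|\le\Lambda$, so every factor other than the single top-order derivative is bounded by $C(\Lambda)$, closing the inequality at order $N$ without loss of derivatives. This mirrors the inductive bookkeeping in the deterministic proof of Theorem \ref{Thm:2.7}, where the coefficient of $\|A^{j}\varphi\|$ involves only norms of order $<j$. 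Once this estimate is secured, uniqueness follows by applying the same termwise bounds to the difference of two solutions (using the Theorem \ref{Thm:2.7}-type hypothesis that the relevant Lipschitz constants depend only on lower-order norms, and the linearity of the noise coefficient) and invoking Gronwall on $[0,\tau_{\Lambda})$.

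Finally, for the probability bound I would apply Chebyshev's inequality to the deviation $Y(t):=\varphi(t)-e^{-iAt}\varphi_{0}$, noting that $\sum_{j\le N-1}\|A^{j}e^{-iAt}\varphi_{0}\|=\sum_{j\le N-1}\|A^{j}\varphi_{0}\|$ is constant in $t$ by unitarity, so that on $\{\tau_{\Lambda}\le\rho\}$ the quantity $\sup_{t\le\rho}\|Y(t)\|$ (in the lower graph norm) must exceed a fixed gap $\Lambda-\sum_{j\le N-1}\|A^{j}\varphi_{0}\|$. Estimating $\mathcal{E}[\sup_{t\le\rho}\|Y(t)\|^{2}]$ over the short interval $[0,\rho]$ by the deterministic and stochastic bounds established above and inserting the result into Chebyshev's inequality produces $m\{\tau_{\Lambda}\le\rho\}\le\rho^{2}M$ with $M=M(\varphi_{0},Q)$ independent of $\rho$, the precise power of $\rho$ being governed by the small-time behaviour of these increments; this gives $m\{\tau_{\Lambda}>\rho\}\ge1-\rho^{2}M$ as claimed.
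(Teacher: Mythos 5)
Your proposal is correct and follows essentially the same route as the paper: a fixed-point/contraction argument in the Banach space of adapted continuous $D(A^{N})$-valued processes under the norm $\mathcal{E}[\sup_{t}\sum_{j=0}^{N}\|A^{j}\varphi(t)\|^{2}]$, with the stochastic convolution controlled by the maximal inequality (which, as you note, reduces via the group law and unitarity of $e^{-iAt}$ to Doob plus the It\^o isometry, exactly the estimate the paper imports from Da Prato--Zabczyk and van Neerven--Veraar), the nonlinearity handled termwise by Hypothesis \ref{Hyp:2.1} after localization at the stopping time $\tau_{\Lambda}$, and Gronwall closing the a priori bound. You in fact supply detail where the paper is terse, in particular the Leibniz-type commutation of $A^{N}$ past the multiplication operator in the Hilbert--Schmidt norm and the explicit Chebyshev argument for the tail bound, which the paper delegates to \cite{Mohan2018}. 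One caveat: your second-moment Chebyshev estimate on $Y(t)=\varphi(t)-e^{-iAt}\varphi_{0}$ gives $\mathcal{E}[\sup_{t\leq\rho}\|Y(t)\|^{2}]=O(\rho)$ from the stochastic convolution (the It\^o isometry contributes only one power of $\rho$), hence $m\{\tau_{\Lambda}\leq\rho\}\leq C\rho$ rather than the stated $\rho^{2}M$; to reach $\rho^{2}$ you would need a fourth-moment (Burkholder--Davis--Gundy) version of the same argument, which your hedge about ``the precise power of $\rho$'' does not quite resolve.
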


\begin{proof}
	For each $T>0$, let $X_{T}$ denote the set of continuous $D(A^{N})$-valued $\Sigma_{t}$-adapted stochastic processes $\varphi(\cdot)$ in the interval $[0,T)$ such that
	\begin{displaymath}
		\|\varphi(\cdot)\|_{X_{T}}^{2}:= {\mathcal E}\left [\sup_{t\in [0,T)} \sum_{j=0}^{N}\|A^{j}\varphi(t)\|^{2}\right ]<\infty.
	\end{displaymath}
	For a fixed $\epsilon>0$, and given $\varphi_{0}\in D(A^{N})$ let
	\begin{displaymath}
		X_{T,\epsilon, \varphi_{0}}:=\left \{ \varphi(\cdot) \vert \varphi(0)=\varphi_{0},\hspace{.1in} \|\varphi(\cdot) -e^{-iA\cdot }\varphi_{0}\|_{X_{T}}\leq \epsilon \right \}.
	\end{displaymath}
	The goal would be to show that for small enough time $T$ the map ${\mathcal J}:X_{T,\epsilon, \varphi_{0}}\rightarrow X_{T,\epsilon, \varphi_{0}}$:
	\begin{equation}
		({\mathcal J}\varphi)(t)= e^{-i A t}\varphi_{0} +\int_{0}^{t}e^{-i A(t-s)}	 J(\varphi(s)) ds+\int_{0}^{t}e^{-i A(t-s)}\varphi(s) dW(s)	
	\end{equation}
	is a contraction on $X_{T,\epsilon, \varphi_{0}}$.

	The following inequality is a consequence of the well-known stochastic convolution estimate \cite{DaPrato1996, DaPrato2014, vanNeerven2020}:
	\begin{displaymath}
		{\mathcal E}\left [ \sup_{0\leq s\leq t} \vert| \int_{0}^{s}e^{-iA(t-r)}\varphi(r)dW(r)\vert|^{2}\right]\leq K {\mathcal E}\left [\int_{0}^{t}\vert | \varphi(r)\sqrt{Q}\vert |_{HS}^{2}dr \right] 
	\end{displaymath}
	\begin{displaymath}
		\leq K (\mbox{tr}Q){\mathcal E}\left [\int_{0}^{t}\vert | \varphi(r)\vert |^{2}dr \right].
	\end{displaymath}
	In fact we have
	\begin{displaymath}
		{\mathcal E}\left [ \sup_{0\leq s\leq t} \vert| \int_{0}^{s}A^{j} e^{-iA(t-r)}\varphi(r)dW(r)\vert|^{2}\right]
	\end{displaymath}
\begin{displaymath}
\leq K {\mathcal E}\left [\int_{0}^{t}\vert | A^{j}\varphi(r)\sqrt{Q}\vert |_{HS}^{2}dr \right] 
	\end{displaymath}
	\begin{displaymath}
		\leq K (\mbox{tr}Q){\mathcal E}\left [\int_{0}^{t}\vert |A^{j} \varphi(r)\vert |^{2}dr \right]\leq K T (\mbox{tr}Q) {\mathcal E} \left [\sup_{0\leq s\leq t}\vert |A^{j} \varphi(t)\vert^{2}\right ].
	\end{displaymath}
	Hence
	\begin{displaymath}
		\sum_{j=0}^{N}  {\mathcal E}\left [ \sup_{0\leq s\leq t} \vert| \int_{0}^{s}A^{j} e^{-iA(t-r)}\varphi(r)dW(r)\vert|^{2}\right]  
	\end{displaymath}
\begin{displaymath}
\leq K T (\mbox{tr}Q)\sum_{j=0}^{N} {\mathcal E}\left [\sup_{0\leq s\leq t}\vert |A^{j} \varphi(s)\|^{2}\right].	
	\end{displaymath}
	Now noting that due to Hypothesis \ref{Hyp:2.1} and Hypothesis of \ref{Thm:2.7} on $J$,
	\begin{displaymath}
			{\mathcal E}\left [\|\int_{0}^{\tau_{\Lambda}}e^{-A(t-s)}A^{j}J(\varphi(s))ds\|^{2}\right ] \leq C(\Lambda,T ) {\mathcal E}\left [\sup_{0\leq t\leq \tau_{N}}\| A^{j}\varphi(t)\|^{2}\right ],
	\end{displaymath}
	we can arrive at
	\begin{displaymath}
		{\mathcal E}\left [\sup_{0\leq t\leq \tau_{\Lambda}}\sum_{j=0}^{N}	\|A^{j}({\mathcal J}\varphi)(t)-e^{-iAt}A^{j}\varphi_{0}\|^{2}\right ]
	\end{displaymath}
\begin{displaymath}
\leq C (\Lambda,\mbox{tr}Q) {\mathcal E}\left [\sup_{t\in [0,\tau_{\Lambda})}\sum_{j=0}^{N}\|A^{j}\varphi(t)\|^{2}\right].	
	\end{displaymath}
	This completes the fixed point argument. The tail probability estimate can be obtained by arguments similar to \cite{Mohan2018}.
\end{proof}

We also recall here that as an $H$-valued square integrable random variable $\varphi(\cdot)\in L^{2}(\Omega, H)$ we have an infinite dimensional multiple Wiener expansion \cite{Alshanskiy2014}:
\begin{displaymath}
	\varphi(t)=\sum_{0}^{\infty}I_{n}(f_{n}),\hspace{.1in} f_{n}\in \hat{L}^{2}((0,\tau)^{n}:{\mathcal L}_{2}^{n}(H^{\otimes n};H)), n\in \mN,
\end{displaymath}
with
\begin{displaymath}
	{\mathcal E}[\|\varphi\|]^{2} =\sum_{0}^{\infty}n! \|f_{n}\|^{2}_{L^{2}((0,\tau)^{n}:{\mathcal L}_{2}^{n}(H^{\otimes n};H))}.	
\end{displaymath}
where
\begin{displaymath}
	I_{n}(\zeta)=n! \int_{0}^{\tau}\int_{0}^{t_{n-1}}\cdots \int_{0}^{t_{1}}\zeta(t_{1},t_{2},\cdots, t_{n})dW(t_{1})\cdots dW(t_{n}),
\end{displaymath}
with $\zeta: [0,\tau]^{n}\times \Omega \rightarrow {\mathcal L}_{2}^{n}(H^{\otimes n};H).$ The multiple Wiener integrals satisfy the following orthogonality relations:
\begin{displaymath}
	{\mathcal E}[(I_{n}(f), I_{m}(g))_{H}]=
	\left\{
	\begin{array}{ll}
		0  & \mbox{if } n\neq m \\
		(n!)^{2}	(f,g)_{L^{2}((0,\tau)^{n}:{\mathcal L}_{2}^{n}(H^{\otimes n};H))} & \mbox{if } m=n.
	\end{array}
	\right.
\end{displaymath}

\subsection{White Noise Calculus Method}
This section relies on the general mathematical framework for the white noise calculus method and the Fock space Gelfand triple as described in \cite{Kuo1975, Hida1980, Hida1994, Obata1994}. White noise method for stochastic partial differential equations has been studied in \cite{Derm1996, Holden2010, Sritharan2023}.
We first $S$-transform the equation \ref{eqn:1.3}, use the deterministic solvability theory we established earlier for \ref{eqn:2.1} and then use the characterization theorem for the solution of the $S$-transformed equation to deduce the solvability theorem for equation \ref{eqn:1.3}.

Let ${\mathcal T}$ be a topological space with Borel measure $\nu(dt)$ denoted $dt$ for simplicity (here ${\mathcal T}$ can be taken as $\mR^{n}$ as well). Let ${\mathcal A}$ be a positive self-adjoint operator on ${\mathcal H}=L^{2}({\mathcal T}, \nu, \mR)$ with Hilbert-Schmidt inverse and $\mbox{ infSpec}({\mathcal A})>1$. We can then construct a Gelfand-triple in a well-known way \cite{Gelfand1964} as:
\begin{displaymath}
	E \subset {\mathcal H}=L^{2}({\mathcal T},\nu, \mR)\subset E^{*},
\end{displaymath}
where $E$ and $E^{*}$ are considered as respectively, spaces of test and generalized functions on ${\mathcal T}$ and we assume the Kubo-Tanenaka hypothesis (H1)-(H3) \cite{Kubo1980} to ensure that the delta function $\delta_{t}$ in the space $E^{*}$. In fact when $E={\mathcal S}(\mR^{n})$ the Schwartz space of rapidly decreasing test functions, ${\mathcal H}=L^{2}(\mR^{n})$ and $E^{*}={\mathcal S}'(\mR^{n})$ is the space of tempered distributions. 

In fact, setting $\vert\zeta\vert_{p}:=\vert{\mathcal A}^{p}\zeta\vert_{0}$ and denoting $E_{p}$ to be the completion with respect to the norm $\vert \cdot \vert_{p}$, we have 
\begin{displaymath}
	E \cong \mathop{proj lim}_{p\rightarrow \infty} E_{p}, \hspace{.1in} E^{*}\cong \mathop{ind lim}_{p\rightarrow \infty }E_{-p}.
\end{displaymath}
Let $\mu$ be the Gaussian measure on $E^{*}$ and consider the complex Hilbert space $(L^{2})=L^{2}(E^{*},\mu;\mC)$ is canonically isomorphic to the Boson Fock space over ${\mathcal H}_{\mC}$ through the Wiener-It\^o-Segal isomorphism:
\begin{displaymath}
	(L^{2})=L^{2}(E^{*},\mu;\mC)=\sum_{n=0}^{\infty}\oplus {\mathcal H}^{\hat{\otimes}n }_{\mC},
\end{displaymath}
each $\varphi \in (L^{2})$ admits a Wiener-It\^o expansion:
\begin{displaymath}
	\varphi(x)=\sum_{n=0}^{\infty} \langle :x^{\otimes n}:,f_{n}\rangle, \hspace{.1in} x\in E^{*}, f_{n}\in {\mathcal H}^{\hat{\otimes}n }_{\mC},
\end{displaymath}
with
\begin{displaymath}
	\|\varphi\|_{0}^{2}=\int_{E^{*}}\vert \varphi(x)\vert^{2}\mu(dx)=\sum_{0}^{\infty}n! \vert f_{n}\vert_{0}^{2}.
\end{displaymath}
Given $\zeta\in E_{\mC}$ we define the exponential vector as
\begin{displaymath}
	\varphi_{\zeta}(x)=\sum_{n=0}^{\infty}  \langle :x^{\otimes n}:,\frac{\zeta^{\otimes n}}{n!}\rangle =\exp\left ( \langle x, \zeta\rangle -\frac{1}{2} \langle \zeta, \zeta\rangle \right ), \hspace{.1in} x\in E^{*},
\end{displaymath}
with $\varphi_{0}$ being the Fock vacuum.

We define the second quantization operator $\Gamma({\mathcal A})$ of ${\mathcal A}$ as:
\begin{displaymath}
	\Gamma({\mathcal A})\varphi_{\zeta} =\varphi_{{\mathcal A}\zeta}=\sum_{n=0}^{\infty}  \langle :x^{\otimes n}:,\frac{({\mathcal A}\zeta)^{\otimes n}}{n!}\rangle 
\end{displaymath}
\begin{displaymath}
=\exp\left ( \langle x, {\mathcal A}\zeta\rangle -\frac{1}{2} \langle {\mathcal A} \zeta,{\mathcal A} \zeta\rangle \right ), \hspace{.1in} \zeta \in D({\mathcal A}), x\in E^{*}_{\mC}.
\end{displaymath}
The second quantization operator is unique, positive, self-adjoint with a Hilbert-Schmidt inverse \cite{Simon1974,Obata1994, Kuo1996}, we can construct a Gelfand-triple on $(L^{2})$ based on this operator:
\begin{displaymath}
	(E)\subset (L^{2}) =L^{2}(E^{*},\mu,\mC)\subset (E)^{*},
\end{displaymath}
where the elements in $(E)$ and $(E)^{*}$ are respectively white noise test functionals and white noise generalized functionals. The above Gelfand triple is called Hida-Kubo-Takenaka space \cite{Kubo1980}. We will now recall a generalization of this construction due to Kondratiev and Streit \cite{Kon1993}. 

Let $\beta$ be a fixed number with $0\leq \beta \leq 1$. For $\varphi\in L^{2}(E^{*},\mu)$ we introduce a new norm
\begin{displaymath}
	\|\varphi\|_{p,\beta}^{2}=\sum_{n=0}^{\infty} (n!)^{1+\beta}\vert f_{n}\vert^{2}_{p}, 
\end{displaymath}
\begin{displaymath}
	\varphi(x)=\sum_{n=0}^{\infty} \langle :x^{\otimes n}:,f_{n}\rangle, \hspace{.1in} x\in E^{*}, f_{n}\in {\mathcal H}^{\hat{\otimes}n }_{\mC}.
\end{displaymath}
For any $p\geq 0$, $(E_{p})_{\beta}=\left \{\varphi; \|\varphi\|_{p,\beta}<\infty\right\}$ becomes a Hilbert space. We set
\begin{displaymath}
	(E)_{\beta}=\mathop{projlim}_{p\rightarrow \infty} (E_{p})_{\beta},
\end{displaymath}
which becomes a countable nuclear Hilbert space. In fact \cite{Obata1999} for any $p\geq 0$ the canonical map $i_{p}: (E_{p+1})_{\beta} \rightarrow (E_{p})_{\beta}$ is of Hilbert-Schmidt type with $\|i_{p}\|_{HS}=\|\Gamma(A)^{-1}\|_{HS}$, where $\Gamma(A)$ is the second quantization operator of $A$ acting on $\Gamma({\mathcal H}_{\mC})$.

For $0\leq \beta <1$ and $p\geq 0$ we set
\begin{displaymath}
	\|\varphi\|^{2}_{-p-\beta}=\sum_{n=0}^{\infty} (n!)^{1-\beta}\vert f_{n}\vert^{2}_{-p}, \hspace{.1in} \varphi \sim (f_{n}).
\end{displaymath}
Then $\|\cdot\|_{-p-\beta}$ is a Hilbertian norm on $L^{2}(E^{*},\mu)$ and we denote its completion as $(E_{-p})_{-\beta}$. Hence the dual space of $(E)_{\beta}$ is obtained as 
\begin{displaymath}
	(E)^{*}_{\beta} \cong \mathop{indlim}_{p\rightarrow \infty} (E_{-p})_{-\beta},	
\end{displaymath}
and we obtain the Kondratiev-Streit type Gelfand triple
\begin{displaymath}
	(E)_{\beta}\subset L^{2}(E^{*},\mu)\subset (E)^{*}_{\beta}.
\end{displaymath}

For each $\Phi\in (E)^{*}$ there exists a unique sequence $\{F_{n}\}_{n=0}^{\infty}$, $F_{n}\in (E_{\mC}^{\otimes n})^{*}_{\mbox{sym}}$ such that
\begin{displaymath}
	\langle \langle \Phi, \varphi\rangle\rangle =\sum_{n=0}^{\infty}n!\langle F_{n},f_{n}\rangle, \hspace{,1in} \varphi \in (E),
\end{displaymath}
where the canonical duality pairing $\langle \langle \cdot, \cdot\rangle\rangle: (E)^{*}\times (E) \rightarrow \mC$. We also have the following formal representation for $\Phi\in (E)^{*}$:
\begin{displaymath}
	\Phi(x)=\sum_{n=0}^{\infty} \langle :x^{\otimes n}:, F_{n}\rangle, \hspace{.1in} x\in E^{*}, 	F_{n}\in (E_{\mC}^{\otimes n})^{*}_{\mbox{sym}}.
\end{displaymath}
The $S$-transform of a generalized white noise functions $\Phi\in (E)^{*}$ is a function on $E_{\mC}$ defined by the action of $\Phi$ on the exponential vector $\varphi_{\zeta}$:
\begin{displaymath}
	S\Phi(\zeta) =\langle \langle \Phi, \varphi_{\zeta}\rangle\rangle, \hspace{.1in} \zeta \in E_{\mC}.
\end{displaymath}
We thus also have a series representation:
\begin{displaymath}
	S\Phi(\zeta)=\sum_{n=0}^{\infty} \langle F_{n},\zeta^{\otimes n}\rangle, \hspace{.1in} \zeta \in E_{\mC}.
\end{displaymath}
It is well-known that \cite{Kubo1980, Obata1994, Kuo1996} for $\zeta, \eta \in E_{\mC}$ and $\Phi\in (E)^{*}$, 
\begin{displaymath} 
	z\rightarrow S\Phi(z \zeta +\eta)=\langle \langle \Phi, \varphi_{z \zeta +\eta}\rangle\rangle, \hspace{.1in} z\in \mC,
\end{displaymath}
is an entire holomorphic function. The characterization theorem \cite{Obata1994, Kuo1996} gives the reverse results:

\begin{Thm} \label{Thm:3.2} Let $F$ be a $\mC$-valued function on $E_{\mC}$. Then $F=S\Phi$ for some $\Phi\in (E)^{*}$ if and only if
	\begin{enumerate}
		\item for fixed $\zeta,\eta\in E_{\mC}$, the function $z\rightarrow F(z\zeta +\eta)$, $z\in \mC$ is an entire holomorphic function on $\mC$;
		\item there exists $C\geq 0, K\geq 0$ and $p\in \mR$ such that
		\begin{displaymath}
			\vert F(\zeta)\vert\leq C\exp{(K\vert \zeta\vert^{2}_{p})}, \hspace{.1in} \zeta \in E_{\mC},
		\end{displaymath}
		where $\vert \cdot \vert_{p}=\vert {\mathcal A}^{p}\cdot\vert_{0}$.
	\end{enumerate}
\end{Thm}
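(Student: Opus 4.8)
The plan is to treat the two implications separately; necessity is routine and sufficiency carries all of the analytic weight. For necessity, suppose $F = S\Phi$ with $\Phi \in (E)^*$. Since $(E)^*$ is the inductive limit of the dual Hilbert spaces $(E_{-p})$, there is a $p$ with $\|\Phi\|_{-p} < \infty$. The ray-entirety asserted in condition (1) was already recorded just above the statement, so only the growth bound (2) remains to be checked. Here I would use $S\Phi(\zeta) = \langle\langle \Phi, \varphi_\zeta\rangle\rangle$ together with the pairing estimate $|\langle\langle\Phi,\varphi_\zeta\rangle\rangle| \le \|\Phi\|_{-p}\,\|\varphi_\zeta\|_p$ and the explicit computation
\[ \|\varphi_\zeta\|_p^2 = \sum_{n=0}^\infty n!\,\Big|\frac{\zeta^{\otimes n}}{n!}\Big|_p^2 = \sum_{n=0}^\infty \frac{|\zeta|_p^{2n}}{n!} = \exp\big(|\zeta|_p^2\big), \]
which yields $|F(\zeta)| \le \|\Phi\|_{-p}\exp(\tfrac12|\zeta|_p^2)$, i.e. (2) with $C = \|\Phi\|_{-p}$ and $K = 1/2$.

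For sufficiency the strategy is to reconstruct the chaos kernels of a candidate $\Phi$ directly from $F$. Condition (1), supplemented (as is standard on nuclear spaces) by the local boundedness coming from (2), makes $F$ genuinely analytic, so I would extract its homogeneous parts by the Cauchy integral along rays,
\[ \langle F_n, \zeta^{\otimes n}\rangle = \frac{1}{2\pi i}\oint_{|z|=r}\frac{F(z\zeta)}{z^{n+1}}\,dz, \]
and extend each $F_n$ by polarization to a symmetric $n$-linear form on $E_{\mathbb{C}}$. Feeding the bound (2) into this integral and optimizing the radius (the minimum of $r^{-n}\exp(Kr^2|\zeta|_p^2)$ occurs at $r^2 = n/(2K|\zeta|_p^2)$) produces the diagonal estimate
\[ |\langle F_n,\zeta^{\otimes n}\rangle| \le C\Big(\frac{2Ke}{n}\Big)^{n/2}|\zeta|_p^n. \]

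The decisive step, and the one I expect to be the main obstacle, is to convert this diagonal control into a bound on the kernel norm $|F_n|_{-q}$ in $(E_{\mathbb{C}}^{\hat{\otimes}n})_{-q}$ and then to sum $\|\Phi\|_{-q}^2 = \sum_n n!\,|F_n|_{-q}^2$. A diagonal estimate alone does not bound the full symmetric tensor norm, so I would invoke a polarization inequality together with the nuclear structure of the triple: choosing $q > p$, the canonical embedding $E_q \hookrightarrow E_p$ is Hilbert-Schmidt with norm $\rho = \|\mathcal{A}^{-(q-p)}\|_{HS}$, and the tensorized embeddings contribute a factor $\rho^n$ to each homogeneous component. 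Crucially, the hypotheses $\mathrm{infSpec}(\mathcal{A}) > 1$ and the Hilbert-Schmidt inverse of $\mathcal{A}$ let $\rho$ be made arbitrarily small by enlarging $q$. Combining this with Stirling's formula shows that the potentially divergent combinatorial factors arising from $n!$ and from the diagonal bound are defeated by $\rho^{2n}$ once $q$ is large enough, so the series converges and $\Phi := \sum_n \langle :x^{\otimes n}:, F_n\rangle$ defines an element of $(E)^*$. Finally I would verify $S\Phi = F$: by construction both sides are analytic with the same Taylor coefficients $F_n$, whence $S\Phi(\zeta) = \sum_n\langle F_n,\zeta^{\otimes n}\rangle$ reconstructs $F(\zeta)$, and injectivity of the $S$-transform (totality of the exponential vectors in $(E)$) secures the identification.
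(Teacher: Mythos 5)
The paper does not prove Theorem \ref{Thm:3.2}; it quotes it as the known characterization theorem for the $S$-transform and cites Obata and Kuo for the proof. Your argument is precisely the standard Potthoff--Streit proof given in those references -- the exponential-vector norm computation $\|\varphi_\zeta\|_p^2=e^{|\zeta|_p^2}$ for necessity, and for sufficiency the Cauchy-integral extraction of the homogeneous kernels, the optimized radius giving the $(2Ke/n)^{n/2}$ diagonal bound, and the polarization-plus-Hilbert--Schmidt step in which $\rho=\|\mathcal{A}^{-(q-p)}\|_{HS}^{\phantom{1}}$ is made small enough to defeat the Stirling factors -- so it is correct and coincides with the proof the paper implicitly relies on.
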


The $\Phi, \Psi\in (E)^{*}$ be two generalized white noise functionals with $F=S\Phi$ and $G=S\Psi$. Then $FG$ satisfies the conditions (1), (2) of the characterization theorem and hence there exists $\Theta\in (E)^{*}$ such that $S\Theta=FG$. Based on this observation we define Wick product of two distributions as follows:
\begin{Def}
	Let $\Phi, \Psi\in (E)^{*}$ be two generalized white noise functionals then the Wick product $\Phi : \Psi\in (E)^{*}$ is the unique generalized white noise functional such that $S(\Phi :\Psi)=(S\Phi)( S\Psi).$
\end{Def}
We define the Wick product of our nonlinearity $J(\varphi)$ using the following property:
\begin{displaymath}
	S (:J(\varphi):) = J(S \varphi).
\end{displaymath}
Now taking $S$ transform of \ref{eqn:1.3} we obtain \ref{eqn:2.1}. The first main theorem in this section is a consequence of Theorem \ref{Thm:2.5} and Theorem \ref{Thm:3.2}:
\begin{Thm} \label{Thm:3.3}
	For $\varphi_{0}\in H$ and small enough time $T$ there exists a unique solution $	\varphi\in ((E)\otimes H)^{*}$   to \ref{eqn:1.2} such that the following series converges in $ ((E)\otimes H)^{*}$:
	\begin{displaymath}
		\varphi(Z)=\sum_{0}^{\infty}\langle :Z^{\otimes n}:,F_{n}\rangle, \hspace{.1in} Z\in E^{*}, F_{n}\in (E_{\mC}^{\otimes n}\otimes H)^{*}_{\mbox{sym}}.
	\end{displaymath}
\end{Thm}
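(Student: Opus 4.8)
The plan is to follow the reduction already signposted before the statement: apply the $S$-transform to the Wick equation \eqref{eqn:1.3}, recognise the result as the deterministic mild equation \eqref{eqn:2.1}, solve the latter by the local existence Theorem \ref{Thm:2.5}, and then reconstruct the white noise distribution solution $\varphi\in((E)\otimes H)^{*}$ by means of the characterization Theorem \ref{Thm:3.2}. Uniqueness will require no separate argument, since the $S$-transform is injective and will transport the uniqueness already contained in Theorem \ref{Thm:2.5}.

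First I would set $u(t,\cdot):=S\varphi(t)$ and transform \eqref{eqn:1.3} term by term. The $S$-transform commutes with the bounded propagator $e^{-iA(t-s)}$ and with Bochner integration in $s$; by the defining property $S(:\!J(\varphi)\!:)=J(S\varphi)$ the Wick-quantized nonlinearity becomes $J(u)$; and the Wick product $Z\!:\!\varphi$ transforms into the ordinary pointwise product $(SZ)(S\varphi)$. Hence, writing $\Theta:=SZ$, the equation for $u$ is exactly \eqref{eqn:2.1}. Since $Z\in(E)^{*}$, the forward half of the characterization theorem guarantees that $\Theta=SZ$ is entire in each direction $z\mapsto SZ(z\zeta+\eta)$ and obeys a bound $|SZ(\zeta)|\le C\exp(K|\zeta|_{p}^{2})$; in particular, for fixed $\zeta,\eta$ the scalar coefficient $\Theta(\zeta+z\eta)$ is holomorphic in $z$ and bounded on compacta, which is precisely the hypothesis on $\Theta$ used in Theorem \ref{Thm:2.5}.

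Next, Theorem \ref{Thm:2.5} solves \eqref{eqn:2.1}: for each pair $\zeta,\eta\in E_{\mC}$ it yields, on a common interval $[0,T)$ with $T$ uniform over bounded sets of data, a unique $D(A^{N})$-valued solution $u(t,\zeta+z\eta)$, holomorphic in $z$, with
\begin{displaymath}
	\sup_{t\in[0,T)}\sum_{j=0}^{N}\|A^{j}u(t,\zeta+z\eta)\|\le\Big(\sum_{j=0}^{N}\|A^{j}\varphi_{0}\|\Big)\,e^{tC(T)|z|}.
\end{displaymath}
To recover $\varphi(t)$ I would verify the two hypotheses of Theorem \ref{Thm:3.2} for the $H$-valued map $F(\xi):=u(t,\xi)$, $\xi\in E_{\mC}$. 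Condition (1), the directional entire holomorphy of $z\mapsto u(t,\zeta+z\eta)$, is already built into the construction; it is the content of the analyticity theorem proved just after Theorem \ref{Thm:2.8} (differentiate the Duhamel map in $z$ and solve the linear variational equation by the bounded-perturbation Theorem \ref{Thm:2.4}). Condition (2) is the exponential growth bound $|F(\xi)|\le C\exp(K|\xi|_{p}^{2})$. Granting both, Theorem \ref{Thm:3.2} returns a unique $\varphi(t)\in((E)\otimes H)^{*}$ with $S\varphi(t)=F$, and its chaos expansion is the asserted series $\varphi(Z)=\sum_{n}\langle:\!Z^{\otimes n}\!:,F_{n}\rangle$ with $F_{n}\in(E_{\mC}^{\otimes n}\otimes H)^{*}_{\mathrm{sym}}$.

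The hard part is condition (2), i.e. controlling the growth of $F(\xi)$ in the full argument $\xi$. The danger is that propagating the coefficient crudely — on the linear part $u$ behaves like $e^{t\Theta}u_{0}=e^{t\,SZ(\xi)}u_{0}$ — would turn the second-order bound $|SZ(\xi)|\le C\exp(K|\xi|_{p}^{2})$ into a double exponential, which violates (2). The delicate point is therefore to keep the $\xi$-growth at order $\exp(K|\xi|_{p}^{2})$: either one restricts $Z$ to the subclass of $(E)^{*}$ whose $S$-transform grows at most linearly (equivalently polynomially) in $|\xi|_{p}$, so that $e^{t|SZ(\xi)|}\le C'\exp(K'|\xi|_{p}^{2})$ and (2) holds outright, or one passes to a Kondratiev--Streit triple $(E)_{\beta}\subset(L^{2})\subset(E)^{*}_{\beta}$ with $\beta$ chosen large enough that the admissible growth order there absorbs the amplification produced by the Duhamel iteration. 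Tracking this amplification through the contraction argument of Theorem \ref{Thm:2.5}, and checking that the constants $C,K,p$ and the existence time $T$ can be taken uniform in $\xi$, is the main technical content; once (1) and (2) are in hand, injectivity of the $S$-transform promotes the uniqueness of Theorem \ref{Thm:2.5} to uniqueness of $\varphi$ in $((E)\otimes H)^{*}$.
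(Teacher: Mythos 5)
Your proposal follows exactly the paper's route: the paper offers no proof of Theorem \ref{Thm:3.3} beyond the single sentence that it ``is a consequence of Theorem \ref{Thm:2.5} and Theorem \ref{Thm:3.2}'' after $S$-transforming \eqref{eqn:1.3} into \eqref{eqn:2.1}, which is precisely your reduction, including the identifications $S(:\!J(\varphi)\!:)=J(S\varphi)$ and $\Theta=SZ$. You have in fact gone further than the paper by isolating the genuine obstruction --- the contraction argument for Theorem \ref{Thm:2.5} explicitly assumes ``the function $\Theta$ is bounded,'' whereas for general $Z\in(E)^{*}$ one only has $|SZ(\xi)|\le C\exp(K|\xi|_{p}^{2})$, so a naive Gronwall/Duhamel estimate yields growth of order $\exp\bigl(tC\exp(K|\xi|_{p}^{2})\bigr)$ and condition (2) of Theorem \ref{Thm:3.2} fails; your two remedies (restricting the growth class of $Z$, or passing to the Kondratiev--Streit triple, which is exactly what the paper does for the operator-valued analogue in Proposition \ref{Thm:3.6} via Theorem \ref{Thm:3.5}) are the standard ways to close this gap, which the paper leaves unaddressed.
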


For the case of operator valued noise we use the characterization theorem for the operator symbols \cite{Obata1994} to deduce the solvability theorem for \ref{eqn:1.4} using iterative method for Wick quantized operator equations developed in \cite{Obata1999}.

Let us now introduce the Hida derivative \cite{Hida1980, Hida1994, Obata1994, Kuo1996}.  For any $y\in E^{*}$ and $\varphi \in (E)$ we set
\begin{displaymath}
	D_{y}\varphi(x)=\lim_{\theta\rightarrow 0}\frac{\varphi(x+\theta y)-\varphi(x)}{\theta}, \hspace{.1in} x\in E^{*}.
\end{displaymath}
The limit exists \cite{Obata1994} in ${\mathcal L}((E),(E))$. Since the delta function $\delta_{t}$ belongs to $E^{*}$ by Kubo-Takenaka hypotheses \cite{Kubo1980} we can specialize the Hida derivative to the case of $y=\delta_{t}$ and denote
\begin{displaymath}
	\partial_{t}:=D_{\delta_{t}}, \hspace{.1in} t\in {\mathcal T}.
\end{displaymath}
The operator $\partial_{t}\in {\mathcal L}((E),(E))$ is called the annihilation operator at a point $t\in {\mathcal T}$. The adjoint of this operator $\partial^{*}_{t}\in {\mathcal L}((E)^{*},(E)^{*})$ is called the creation operator. These two operators satisfy the canonical commutative relation (CCR):
\begin{displaymath}
	[\partial_{s}, \partial_{t}]=0, \hspace{.1in} [\partial_{s}^{*},\partial_{t}^{*}]=0, \hspace{.1in} [\partial_{s},\partial_{t}^{*}]=\delta_{s}(t)I, \hspace{.1in} s,t\in {\mathcal T}.
\end{displaymath}
For any $\kappa \in (E_{\mC}^{\otimes (l+m)})^{*}$ there exists a unique operator $\Xi_{l,m}(\kappa)\in {\mathcal L}((E),(E)^{*})$ such that
\begin{displaymath}
	\langle\langle \Xi_{l,m}(\kappa)\varphi, \psi\rangle \rangle =\langle \kappa, \langle \langle \partial^{*}_{s_{1}}\cdots \partial_{s_{l}}^{*}\partial_{t_{1}}\cdots \partial_{t_{m}}\varphi, \psi\rangle \rangle \rangle, \hspace{.1in} \varphi, \psi \in (E).
\end{displaymath}
where $\partial_{s}^{*}$ and $\partial_{t}$ are respectively creation and annihilation operators at points $s$ and $t$ in the field parameter space ${\mathcal T}$. The operator $\Xi_{l,m}$ is called the integral kernel operator and represented formally as:
\begin{displaymath}
	\Xi_{l,m}(\kappa)=\int_{{\mathcal T}^{l+m}}\kappa(s_{1},\cdots,s_{l},t_{1},\cdots,t_{m})\partial_{s_{1}}^{*}\cdots\partial_{s_{l}}^{*}\partial_{t_{1}}\cdots \partial_{t_{m}}ds_{1}\cdots ds_{l}dt_{1}\cdots dt_{m}.
\end{displaymath}
The following theorem \cite{Obata1993} gives a series expansion for Fock space operators. 

\begin{Thm}\label{Thm:3.4} For any $\Xi\in {\mathcal L}((E), (E)^{*})$ there is a unique family of kernel distributions $\kappa_{l,m}\in (E^{\otimes (l+m)})^{*}_{\mbox{sym $(l,m)$}}$ such that
	\begin{displaymath}
		\Xi\varphi =\sum_{l,m=0}^{\infty} \Xi_{l,m}(\kappa_{l,m})\varphi, \hspace{.1in} \varphi \in (E),
	\end{displaymath}
	converges in $(E)^{*}$.
\end{Thm}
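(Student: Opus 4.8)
The plan is to argue through the operator symbol. For $\Xi\in\mathcal{L}((E),(E)^{*})$ define
\[
	\widehat{\Xi}(\xi,\eta):=\langle\langle \Xi\varphi_{\xi},\varphi_{\eta}\rangle\rangle,\qquad \xi,\eta\in E_{\mC}.
\]
Since the exponential vectors $\{\varphi_{\xi}:\xi\in E_{\mC}\}$ span a dense subspace of $(E)$, an operator in $\mathcal{L}((E),(E)^{*})$ is uniquely determined by its symbol; the characterization theorem for operator symbols \cite{Obata1994} identifies the admissible symbols as exactly those $\mC$-valued functions $\Theta$ on $E_{\mC}\times E_{\mC}$ that are entire in each argument and obey a growth estimate of the form $|\Theta(\xi,\eta)|\le C\exp(K(|\xi|_{p}^{2}+|\eta|_{p}^{2}))$. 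First I would record the symbol of an integral kernel operator: using $\partial_{t}\varphi_{\xi}=\xi(t)\varphi_{\xi}$, the adjoint relation for $\partial_{s}^{*}$, and $\langle\langle\varphi_{\xi},\varphi_{\eta}\rangle\rangle=e^{\langle\xi,\eta\rangle}$, the defining relation for $\Xi_{l,m}(\kappa)$ yields
\[
	\widehat{\Xi_{l,m}(\kappa)}(\xi,\eta)=\langle\kappa,\eta^{\otimes l}\otimes\xi^{\otimes m}\rangle\,e^{\langle\xi,\eta\rangle}.
\]

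The main step is to expand the symbol of a given $\Xi$ into bihomogeneous pieces. Set $\Theta(\xi,\eta):=\widehat{\Xi}(\xi,\eta)\,e^{-\langle\xi,\eta\rangle}$, which is again entire in each variable and satisfies the same type of exponential bound. Expanding $\Theta$ in a two-variable Taylor series at the origin and collecting the terms homogeneous of degree $l$ in $\eta$ and degree $m$ in $\xi$, I obtain a decomposition $\Theta(\xi,\eta)=\sum_{l,m=0}^{\infty}\Theta_{l,m}(\xi,\eta)$. Each component $\Theta_{l,m}$ is a continuous $(l+m)$-linear form, so by the kernel theorem there is a unique $\kappa_{l,m}\in (E_{\mC}^{\otimes(l+m)})^{*}_{\mathrm{sym}(l,m)}$ with $\Theta_{l,m}(\xi,\eta)=\langle\kappa_{l,m},\eta^{\otimes l}\otimes\xi^{\otimes m}\rangle$. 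Reassembling through the symbol formula above, the operator $\sum_{l,m}\Xi_{l,m}(\kappa_{l,m})$ has symbol $\sum_{l,m}\langle\kappa_{l,m},\eta^{\otimes l}\otimes\xi^{\otimes m}\rangle e^{\langle\xi,\eta\rangle}=\Theta(\xi,\eta)e^{\langle\xi,\eta\rangle}=\widehat{\Xi}(\xi,\eta)$, so by uniqueness of the symbol it must coincide with $\Xi$.

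Two points then remain. For convergence, I would extract from the exponential bound on $\Theta$ Cauchy-type estimates on the bihomogeneous components, bounding $|\kappa_{l,m}|_{-p}$ by a quantity of the shape $C\,K^{(l+m)/2}/\sqrt{l!\,m!}$ after the usual absorption of $p$-norms; combined with the standard operator estimate $\|\Xi_{l,m}(\kappa_{l,m})\varphi\|_{-p}\le C\,|\kappa_{l,m}|_{-p}\,\|\varphi\|_{q}$ for $q$ large enough relative to $p$, these show that $\sum_{l,m}\Xi_{l,m}(\kappa_{l,m})\varphi$ converges in $(E)^{*}$ for every $\varphi\in(E)$. For uniqueness of the family, observe that once the factor $e^{-\langle\xi,\eta\rangle}$ is removed the symbols of the $\Xi_{l,m}(\kappa_{l,m})$ live in distinct bidegrees; hence if $\sum_{l,m}\Xi_{l,m}(\kappa_{l,m})=0$ then each $\langle\kappa_{l,m},\eta^{\otimes l}\otimes\xi^{\otimes m}\rangle\equiv 0$, and polarization forces $\kappa_{l,m}=0$. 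I expect the convergence estimate to be the delicate part: one must track how the loss of regularity in passing from $\Xi$ to the individual kernels is compensated by the factorial gains $1/\sqrt{l!\,m!}$ from the symmetrization, and verify that the exponent $p$ furnished by the characterization theorem already suffices to make the double series summable in a single dual norm $\|\cdot\|_{-p}$.
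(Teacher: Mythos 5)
The paper does not actually prove this statement---it is quoted directly from \cite{Obata1993}---and your symbol-based argument is precisely the proof given in that reference: extract the bihomogeneous Taylor components of $\widehat{\Xi}(\xi,\eta)e^{-\langle\xi,\eta\rangle}$, realize each as a kernel $\kappa_{l,m}$ via the kernel theorem, and resum using the symbol formula and the norm estimates for integral kernel operators. Your outline is correct; the one refinement worth recording is that the Cauchy estimates yield $|\Theta_{l,m}|\lesssim C(2Ke/l)^{l/2}(2Ke/m)^{m/2}|\eta|_{p}^{l}|\xi|_{p}^{m}$, which against the factor $1/\sqrt{l!\,m!}$ still leaves growth of order $e^{(l+m)/2}$, so the double series is summable not in the dual norm $\|\cdot\|_{-p}$ with the $p$ furnished by the growth bound but only after enlarging to some $p'>p$ for which the Hilbert--Schmidt norm of the inclusion $E_{p'}\hookrightarrow E_{p}$, raised to the power $l+m$, absorbs it---harmless, since $(E)^{*}$ is the inductive limit of the $(E_{-p})$.
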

The symbol \cite{Berezin1966,Berezin1971, Kree1978, Hida1994, Obata1993, Obata1994, Kuo1996} of a Fock space operator $\Xi\in {\mathcal L}((E),(E)^{*})$ is a function on $E_{\mC}\times E_{\mC}$ defined by
\begin{displaymath}
	\hat{\Xi}(\zeta, \eta)=\langle \langle \Xi\varphi_{\zeta},\varphi_{\eta}\rangle \rangle, \hspace{.1in}  \forall \zeta, \eta\in E_{\mC}.
\end{displaymath}

According to \cite{Berezin1966, Berezin1971, Obata1999, Ji2021}, for any two operators $\Xi_{1},\Xi_{2}\in {\mathcal L}((E), (E)^{*})$ there exists $\Xi\in {\mathcal L}((E), (E)^{*})$ uniquely determined by
\begin{displaymath}
	\hat{\Xi}(\zeta, \eta) =e^{-\langle \zeta,\eta\rangle}\hat{\Xi}_{1}(\zeta, \eta)\hat{\Xi}_{2}(\zeta, \eta) , \hspace{.1in} \forall \zeta, \eta \in E_{\mC}.
\end{displaymath}
This defines the operator Wick product $\Xi =\Xi_{1}\diamond \Xi_{2}.$  The operator Wick quantization of our nonlinearity is then defined by:
\begin{displaymath}
	\langle \langle J(\varphi)^{\diamond} \varphi_{\zeta}, \varphi_{\eta}\rangle \rangle = e^{-m\langle \zeta,\eta\rangle} J(\hat{\varphi}(\zeta,\eta)), 
\end{displaymath}
for some $m>0$. Here
\begin{displaymath}
	\hat{\varphi}(\zeta, \eta) =\langle \langle \varphi \varphi_{\zeta}, \varphi_{\eta}\rangle \rangle, \hspace{.1in}  \forall \zeta, \eta\in E_{\mC}.
\end{displaymath}

Let us recall the characterization theorem for symbols \cite{Obata1997, Obata1999}:

\begin{Thm} \label{Thm:3.5}
	Let $\Theta$ be a ${\mathcal L}(H)$-valued function on $E_{\mC}\times E_{\mC}$. Then it is the symbol of an operator in ${\mathcal L}((E)_{\beta}\otimes H, (E)^{*}_{\beta}\otimes H)$ if and only if
	\begin{enumerate}
		\item For any $\zeta, \zeta_{1},\eta. \eta_{1}\in E_{\mC}$, and $u,v\in H$ the function
		\begin{displaymath}
			z, w \rightarrow \langle \Theta(z\zeta+\zeta_{1}, w\eta+\eta_{1})u,v\rangle, \hspace{.1in} z, w\in \mC,
		\end{displaymath}
		is entire holomorphic on $\mC \times \mC$;
		\item There exists constants $C\geq 0, K\geq 0$ and $p\geq 0$ such that
		\begin{displaymath}
			\| \Theta (\zeta, \eta)\|_{{\mathcal L}(H)} \leq C \exp{\left ( K (\vert \zeta\vert_{p}^{\frac{2}{1-\beta}}+ \vert \eta\vert_{p}^{\frac{2}{1-\beta}})\right)}, \hspace{.1in} \zeta, \eta \in E_{\mC}.
		\end{displaymath}
	\end{enumerate}
\end{Thm}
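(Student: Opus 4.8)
The plan is to prove the two implications separately and to reduce the $\mathcal{L}(H)$-valued statement to the scalar symbol characterization theorem in the Kondratiev--Streit setting \cite{Obata1997, Obata1999} by passing to matrix elements $\langle\Theta(\zeta,\eta)u,v\rangle$, $u,v\in H$, and then upgrading the scalar estimates to operator-norm estimates uniformly over $H$. Uniqueness of the operator attached to a given symbol is immediate: the exponential vectors $\{\varphi_{\zeta}:\zeta\in E_{\mC}\}$ span a dense subspace of $(E)_{\beta}$, so the vectors $\varphi_{\zeta}\otimes u$ span a dense subspace of $(E)_{\beta}\otimes H$, and the prescribed sesquilinear form $\langle\langle \Xi(\varphi_{\zeta}\otimes u),\varphi_{\eta}\otimes v\rangle\rangle=\langle\Theta(\zeta,\eta)u,v\rangle$ determines $\Xi$ completely.

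For the necessity direction, suppose $\Theta=\hat{\Xi}$ for some $\Xi\in\mathcal{L}((E)_{\beta}\otimes H,(E)^{*}_{\beta}\otimes H)$. Condition (1) holds because $z\mapsto\varphi_{z\zeta+\zeta_{1}}$ is an entire $(E)_{\beta}$-valued map (exponential vectors depend holomorphically on their parameter) and $\Xi$ together with the duality pairing are continuous, so the composite $\langle\Theta(z\zeta+\zeta_{1},w\eta+\eta_{1})u,v\rangle$ is separately, hence jointly, entire in $(z,w)$. For condition (2) I would use continuity of $\Xi$: there are $p\geq 0$ and $C'\geq 0$ with $\|\Xi\Phi\|_{-p-\beta}\leq C'\|\Phi\|_{p,\beta}$ for the natural Hilbertian norms on the tensor products; applying this to $\Phi=\varphi_{\zeta}\otimes u$, pairing against $\varphi_{\eta}\otimes v$, and inserting the exponential-vector estimate $\|\varphi_{\zeta}\|_{p,\beta}\leq C\exp(K|\zeta|_{p}^{2/(1-\beta)})$ gives the claimed bound after taking the supremum over $u,v$ in the unit ball of $H$.

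The substance of the theorem is sufficiency. Given (1)--(2), for fixed $u,v$ the scalar function $F_{u,v}(\zeta,\eta)=\langle\Theta(\zeta,\eta)u,v\rangle$ satisfies the hypotheses of the scalar two-variable symbol characterization theorem \cite{Obata1997, Obata1999}, the operator-level analogue of the $S$-transform characterization Theorem~\ref{Thm:3.2} adapted to the Kondratiev--Streit spaces; note that the growth exponent $2/(1-\beta)$ is precisely the one matched to the weights $(n!)^{1\pm\beta}$ of $(E)_{\beta}$ and $(E)^{*}_{\beta}$. I would recover the kernel distributions $\kappa_{l,m}$ of the putative operator directly from $\Theta$ (so that they are automatically $\mathcal{L}(H)$-valued) by expanding the entire function $(z,w)\mapsto\Theta(z\zeta,w\eta)$ in its Taylor coefficients and polarizing, extracting the coefficients by the Cauchy integral formula. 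Bounding each coefficient of order $n$ by $C\exp(Kr^{2/(1-\beta)})\,r^{-n}$ on a circle of radius $r$ and optimizing the radius (the Gaussian optimization minimizing $Kr^{2/(1-\beta)}-n\log r$) yields a factor of order $(n!)^{-(1-\beta)/2}$ on $|\kappa_{l,m}|_{-p}$. Assembling the kernels through the integral-kernel-operator series of Theorem~\ref{Thm:3.4} then produces a single $\mathcal{L}(H)$-valued operator whose matrix elements are the $F_{u,v}$.

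The main obstacle will be to verify that this reconstructed object is a genuinely \emph{bounded} operator in $\mathcal{L}((E)_{\beta}\otimes H,(E)^{*}_{\beta}\otimes H)$, with all estimates \emph{uniform in $u,v$}, rather than merely a bounded white-noise operator for each fixed pair. Concretely one must turn the pointwise bound (2) into a continuity estimate $\|\Xi\Phi\|_{-q-\beta}\leq C\|\Phi\|_{p,\beta}$, where the loss of regularity $p\to q$ is absorbed by the Hilbert--Schmidt factors $\|i_{p}\|_{HS}=\|\Gamma(A)^{-1}\|_{HS}$ of the canonical embeddings: the factorial gain $(n!)^{-(1-\beta)}$ from the kernel bounds, weighted against the dual weights $(n!)^{1-\beta}$, only sums after one trades in the extra decay supplied by these nuclear embeddings. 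This is where the precise exponent $2/(1-\beta)$ is essential and where the scalar argument of \cite{Obata1999} must be carried through with the $H$-tensor factor retained at every step and all constants chosen independently of $u,v\in H$.
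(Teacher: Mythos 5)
The paper does not actually prove Theorem~\ref{Thm:3.5}: it is introduced with ``Let us recall the characterization theorem for symbols'' and simply attributed to \cite{Obata1997, Obata1999}, so there is no in-paper argument to compare against. Your outline is a faithful reconstruction of the standard proof in those references: necessity from entire dependence of exponential vectors on their parameter together with the continuity estimate and the bound $\|\varphi_{\zeta}\|_{p,\beta}\leq C\exp(K|\zeta|_{p}^{2/(1-\beta)})$; sufficiency by Taylor expansion of the symbol, Cauchy estimates optimized in the radius to produce the $(n!)^{-(1-\beta)/2}$ decay of the kernel norms, and reassembly through the Fock expansion of Theorem~\ref{Thm:3.4} with the Hilbert--Schmidt embeddings $\|i_{p}\|_{HS}=\|\Gamma(A)^{-1}\|_{HS}$ absorbing the loss of regularity --- and the issue you single out, carrying the $H$-tensor factor with estimates uniform in $u,v$, is precisely the content of the vector-valued extension in \cite{Obata1994a}. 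As a proposal it identifies the correct strategy and the genuinely hard steps (the operator-norm bounds for ${\mathcal L}(H)$-valued integral kernel operators and the polarization/kernel-theorem step producing symmetric distributions are sketched rather than carried out), which is as much as can be asked given that the paper itself defers entirely to the literature here.
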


Now we take the symbol of the equation \ref{eqn:1.4} and get equation \ref{eqn:2.1} for the symbols associated with the operator $\varphi$.	As a consequence of Theorem \ref{Thm:2.4}, Theorem \ref{Thm:3.4} and Theorem \ref{Thm:3.5} we state:

\begin{Pro}\label{Thm:3.6}
	Let $A$ and $J$ satisfy the Hypothesis \ref{Thm:2.1} and that of Theorem \ref{Thm:2.7}. For $\varphi_{0}\in {\mathcal L}(H)$ and small enough time $T$ there exists a unique solution $\varphi \in {\mathcal L}((E)_{\beta}\otimes H, (E)^{*}_{\beta}\otimes H)$ to \ref{eqn:1.4} such that the following operator series converges in $ ((E)_{\beta}\otimes H)^{*}$ for $\deg{\Xi} \leq 2/(1-\beta)$:
	\begin{displaymath}
		\varphi(\cdot)\psi=\sum_{l,m=0}^{\infty}\Xi_{l,m}(\kappa_{l,m})\psi, \hspace{.1in} \psi\in (E)_{\beta}\otimes H.
	\end{displaymath}
\end{Pro}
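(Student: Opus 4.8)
The plan is to pass to symbols, where the operator equation \eqref{eqn:1.4} collapses to a deterministic evolution of the form \eqref{eqn:2.1}, to solve that by Theorem \ref{Thm:2.5}, and then to recover a genuine Fock-space operator by the symbol characterization Theorem \ref{Thm:3.5}. First I would apply the symbol map $\Xi\mapsto\hat\Xi(\zeta,\eta)=\langle\langle\Xi\varphi_\zeta,\varphi_\eta\rangle\rangle$ to both sides of \eqref{eqn:1.4}. The symbol map is linear and the free propagator $e^{-iAt}$ acts only on the $H$-factor, so it commutes with the symbol operations and passes through the time integrals. The two nonlinear and multiplicative terms transform by the rules recorded above: the operator Wick quantization gives $\widehat{J(\varphi)^\diamond}(\zeta,\eta)=e^{-m\langle\zeta,\eta\rangle}J(\hat\varphi(\zeta,\eta))$, while the operator Wick product gives $\widehat{\Xi\diamond\varphi}(\zeta,\eta)=e^{-\langle\zeta,\eta\rangle}\hat\Xi(\zeta,\eta)\hat\varphi(\zeta,\eta)$. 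Writing $\hat\varphi(t;\zeta,\eta)\in{\mathcal L}(H)$ for the symbol of the unknown, these identities turn \eqref{eqn:1.4} into an equation of the form \eqref{eqn:2.1} in which the bounded coefficient $\Theta(\zeta+z\eta)$ is played by $e^{-\langle\zeta,\eta\rangle}\hat\Xi(\zeta,\eta)$ and the nonlinearity acts pointwise as $J$; the scalar factor $e^{-m\langle\zeta,\eta\rangle}$ is bounded and is absorbed into a redefinition of $J$ exactly as anticipated in the remarks preceding Theorem \ref{Thm:2.6}.

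Next, for each fixed $(\zeta,\eta)\in E_{\mC}\times E_{\mC}$ I would solve this deterministic symbol equation by the operator-valued form of Theorem \ref{Thm:2.5}, obtaining on a common short interval $[0,T)$ a unique ${\mathcal L}(H)$-valued solution $\hat\varphi(t;\zeta,\eta)$ together with the a priori bound $\sum_{j=0}^N\|A^j\hat\varphi(t;\zeta,\eta)\|\le\big(\sum_{j=0}^N\|A^j\varphi_0\|\big)e^{tC(T)|z|}$ along each complex ray. To meet condition (1) of Theorem \ref{Thm:3.5} I would invoke the holomorphy theorem proved just above: after the substitutions $\zeta\mapsto z\zeta+\zeta_1$ and $\eta\mapsto w\eta+\eta_1$, it gives entireness in each of the two complex variables separately, for every $u,v\in H$, of the scalar map $(z,w)\mapsto\langle\hat\varphi(t;z\zeta+\zeta_1,w\eta+\eta_1)u,v\rangle$; joint entireness on $\mC\times\mC$ then follows by Hartogs' theorem. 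This is precisely condition (1).

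The decisive step is condition (2), the exponential growth estimate of order $2/(1-\beta)$. Here I would combine the solution bound of Theorem \ref{Thm:2.5} with a symbol bound for $\Xi$: the hypothesis $\deg\Xi\le2/(1-\beta)$ means, through Theorem \ref{Thm:3.4} and the characterization of symbols, that $\|\hat\Xi(\zeta,\eta)\|_{{\mathcal L}(H)}\le C\exp\big(K(|\zeta|_p^{2/(1-\beta)}+|\eta|_p^{2/(1-\beta)})\big)$. Since the coefficient constant $C(T)$ in the exponential factor $e^{tC(T)|z|}$ of Theorem \ref{Thm:2.5} is a monotone function of the norm of this coefficient, feeding the above bound into it — and noting that the Gaussian factors $e^{-\langle\zeta,\eta\rangle}$ and $e^{-m\langle\zeta,\eta\rangle}$ only improve the estimate — shows that $\hat\varphi(t;\zeta,\eta)$ obeys the required order-$2/(1-\beta)$ growth uniformly for $t\in[0,T)$. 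By Theorem \ref{Thm:3.5} the family $\hat\varphi(t;\cdot)$ is therefore the symbol of a unique operator $\varphi(t)\in{\mathcal L}((E)_\beta\otimes H,(E)^*_\beta\otimes H)$, and since a Fock-space operator is determined by its symbol, uniqueness at the symbol level yields uniqueness of $\varphi(t)$. Finally, applying Theorem \ref{Thm:3.4} to $\varphi(t)$ produces the integral-kernel expansion $\varphi(\cdot)\psi=\sum_{l,m}\Xi_{l,m}(\kappa_{l,m})\psi$ converging in $((E)_\beta\otimes H)^*$.

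I expect the main obstacle to lie in the growth bookkeeping of the third paragraph: one must verify that composing the exponential-in-$|z|$ bound of Theorem \ref{Thm:2.5} with the order-$2/(1-\beta)$ growth of $\hat\Xi$ does not exceed the admissible order in Theorem \ref{Thm:3.5}, and that all constants stay uniform over $[0,T)$. It is exactly this interplay between the linear-in-$|z|$ solution bound and the $2/(1-\beta)$-homogeneous norms that forces the restriction $\deg\Xi\le2/(1-\beta)$, and it must be tracked with care; the passage from separate to joint holomorphy via Hartogs is a secondary point that should also be stated explicitly.
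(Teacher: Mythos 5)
Your overall strategy coincides with the first of the two routes the paper sketches for this proposition: pass to operator symbols, reduce \eqref{eqn:1.4} to the deterministic operator-valued analogue of \eqref{eqn:2.1}, solve that, and reassemble the operator via the characterization Theorem \ref{Thm:3.5} and the kernel expansion Theorem \ref{Thm:3.4}. Be aware, though, that the paper's own proof is only a sketch of this route (it explicitly defers the operator-valued solvability of \eqref{eqn:2.1} to the infinite-dimensional Riccati literature and to forthcoming papers), and that the paper also records a second, genuinely different route which you do not mention: Yosida-approximate $A$ to a bounded $A_{\lambda}$, truncate $J$ to a bounded $J_{\mu}$, solve the resulting Wick-quantized quantum stochastic differential equation in closed form by Obata's Wick-exponential formula $\Phi_{t}=\mathrm{wexp}\,\Omega_{t}\diamond\bigl(\int_{0}^{t}\mathrm{wexp}(-\Omega_{s})\diamond M_{s}\,ds+\Phi_{0}\bigr)$, and then remove the approximations. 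It is in that second route that the hypothesis $\deg\Xi\leq 2/(1-\beta)$ does its real work, as the condition under which the Wick exponential converges in ${\mathcal L}((E)_{\beta}\otimes H,(E)^{*}_{\beta}\otimes H)$.

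The one step of your argument that would fail as written is the growth estimate. You propose to bound $\|\hat{\Xi}(\zeta,\eta)\|_{{\mathcal L}(H)}$ by $C\exp\bigl(K(\vert\zeta\vert_{p}^{2/(1-\beta)}+\vert\eta\vert_{p}^{2/(1-\beta)})\bigr)$ and then feed this into the coefficient $C(T)$ of the bound $e^{tC(T)\vert z\vert}$ from Theorem \ref{Thm:2.5}. That composition produces a bound of the form $\exp\bigl(tC\exp(K(\cdots))\bigr)$, i.e.\ growth of infinite order, which violates condition (2) of Theorem \ref{Thm:3.5}; the Gaussian factors $e^{-\langle\zeta,\eta\rangle}$ and $e^{-m\langle\zeta,\eta\rangle}$ do not rescue this. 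The finite-degree hypothesis must be used differently: a finite sum $\Xi=\sum_{l+m\leq\deg\Xi}\Xi_{l,m}(\kappa_{l,m})$ has normalized symbol $e^{-\langle\zeta,\eta\rangle}\hat{\Xi}(\zeta,\eta)$ that is a \emph{polynomial} in $(\zeta,\eta)$ of degree at most $2/(1-\beta)$, and it is the exponentiation of this polynomial bound by the Gronwall/fixed-point step that lands exactly at the admissible order $2/(1-\beta)$. You need to insert that polynomial bound before exponentiating, not an exponential one. (You are also, like the paper itself, invoking an operator-valued version of Theorem \ref{Thm:2.5} that is only asserted, not proved, here; the secondary points you raise, such as Hartogs for joint holomorphy, are fine.)
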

\begin{proof}
	We can take two approaches to this theorem. First one is to take the symbol of \ref{eqn:1.4}. Establish solvability for the nonlinear operator version of equation \ref{eqn:2.1} and then use the characterization theorem \ref{Thm:3.5} to complete the construction of the solution. We note here that the symbols of the equation \ref{eqn:2.1} satisfy an operator valued evolution and such operator evolutions are extensively studied in the context of infinite dimensional linear control systems and filtering theory \cite{Temam1971, DaPrato1972, Tartar1974, Kuiper1980, Kuiper1985}. These are called infinite dimensional Riccati equations and their generalizations. We will elaborate the use of these theories in the current context in our forthcoming papers \cite{Sritharan2025a, Sritharan2025b}.
	
	A second approach would be to use Yosida approximation to $A$ to obtain a bounded operator $A_{\lambda}=\lambda A (\lambda I-A)^{-1}, \lambda >0$, and a suitable $\mu$-ball truncation in ${\mathcal H}$ leading to a bounded nonlinearity $J_{\mu}$ for $J$ followed by Wick quantization to obtain $J_{\mu}^{\diamond}$ so that we end up with a quantum dynamic stochastic differential equation studied for example by \cite{Obata1997, Obata1999}:
	\begin{displaymath}
		\frac{d}{dt}\Phi = \Xi_{t} \diamond \Phi + M_{t}
	\end{displaymath}
	where
	\begin{displaymath}
		M_{t}=-i A_{\lambda} \diamond \Phi + J_{\mu}(\Phi)^{\diamond},
	\end{displaymath}
	and express the solution as \cite{Obata1999}:
	\begin{displaymath}
		\Phi_{t}= \mbox{wexp } \Omega_{t}\diamond \left (\int_{0}^{t} \mbox{wexp } (-\Omega_{s}) \diamond M_{s}ds + \Phi_{0}   \right)
	\end{displaymath}
	where
	\begin{displaymath}
		\Omega_{t}=\int_{0}^{t}\Xi_{s}ds \hspace{.1in} \mbox{  and  }\hspace{.1in}  \mbox{ wexp } \Omega = \sum_{n=0}^{\infty} \frac{1}{n!} \underbrace{\Omega\diamond\cdots \diamond \Omega}_{n  \small{\mbox{ times}}}.
	\end{displaymath}
	Proof would then be completed by taking $\lambda\rightarrow \infty$ and $\mu\rightarrow \infty$. These methods will be elaborated in the forthcoming papers \cite{Sritharan2025a, Sritharan2025b}.
	
\end{proof}

\section{Laser Propagation and Generation Models}
In this section we will give formal derivations of various Laser propagation and generation models and discuss the individual mathematical characteristics of each of the models.
\subsection{Continuous and Pulse Wave Models: The Stochastic Paraxial and Klein-Gordon Equations}

The paraxial equation is a very well-known model widely used in the literature on the acoustic wave and electromagnetic wave propagation in random media \cite{Tatarski1961, Papa1973, Tap1977, Strohbehn1978, Sp2002,Sprangle2003, Gustafsson2019, Sritharan2023}. As in these papers, we derive from the Maxwell's equations a wave equation for the electric field $E(x,t)$:
\begin{equation}
	\Delta E(x,t) - \frac{1}{c^{2}} \frac{\partial^{2}}{\partial t^{2}}	(n^{2}(x,t) E(x,t))= -2\nabla (E(x,t)\cdot\nabla \log (n(x,t))), \label{eqn2.1}	
\end{equation}
where $n(x,t)$ is the refractive index of the medium and $c$ is the speed of light. We assume that the time scale of fluctuations in the medium is much slower than the light speed and invoke further simplifications based on this assumption. Thus neglecting the right hand side and also the $n^{2}$ term out of the time derivative we arrive at
\begin{equation}
	\Delta E(x,t) - \frac{n^{2}(x,t)}{c^{2}} \frac{\partial^{2}}{\partial t^{2}}	 E(x,t)=0. \label{eqn2.2}
\end{equation}
Substituting a plane wave solution $E(x_{1},x_{2},x_{3},t)=\psi(x_{1},x_{2},x_{3})\exp(ik x_{3}-i\omega t)$ and neglecting the back-scatter term  $\frac{\partial^{2}\psi(x_{1},x_{2},x_{3})}{\partial x_{3}^{2}}$ (using simple scaling argument, see for example \cite{Papa1973, Strohbehn1978}) we arrive at the nonlinear random paraxial equation:
\begin{displaymath}
	i \frac{\partial \psi(x_{1},x_{2},x_{3})}{\partial x_{3}}+ \Delta_{\bot}\psi(x_{1},x_{2},x_{3}) +\chi(x_{1},x_{2},x_{3})\psi(x_{1},x_{2},x_{3})
\end{displaymath}
\begin{equation}
\pm
	\vert \psi(x_{1},x_{2},x_{3})\vert^{p-1}\psi(x_{1},x_{2},x_{3})=0. \label{eqn2.3}
\end{equation}
where $\Delta_{\bot} $ is the two dimensional Laplacian in the variables $x_{2},x_{3}$ and $\chi$ is a random field that depends on the medium and the field strength. 
Renaming the time-like variable $x_{3}$ as $t$ and suppressing the actual time variable $t$ we end up with the {\em two dimensional} linear or nonlinear stochastic Schr\"odinger equation:
\begin{displaymath}
	i \frac{\partial \psi(x_{1},x_{2},t)}{\partial t}+ \Delta_{\bot}\psi(x_{1},x_{2},t) +\chi(x_{1},x_{2},t)\psi(x_{1},x_{2},t)
\end{displaymath}
\begin{equation}
	\pm\vert \psi(x_{1},x_{2},t)\vert^{p-1}\psi(x_{1},x_{2},t)=0. \label{eqn2.4}
\end{equation}
On the other hand, using a wave envelop type representation 
\begin{displaymath}
	E(x_{1},x_{2},x_{3},t)=\psi(x_{1},x_{2},x_{3},t)\exp(ik x_{3}-i\omega t)
\end{displaymath}
in \ref{eqn2.1},we arrive at a time dependent random nonlinear wave equation \cite{Sprangle2003}:
\begin{equation}
	\frac{\partial^{2}}{\partial t^{2}}\psi(x,t)	-(\Delta-k_{0}^{2})\psi(x,t)+\chi(x,t)\psi(x,t)\pm\vert\psi (x,t)\vert^{p-1}\psi(x,t)=0,\label{eqn2.5}
\end{equation}
where $\chi$ is a random field that depend on the medium. Here we have also neglected certain first order derivative terms that appear in \cite{Sprangle2003} to arrive at nonlinear random Klein-Gordon type wave equation. Setting $v=\partial_{t}\psi$ we reframe the above dynamics as:
\begin{displaymath}
	\frac{\partial}{\partial t} \begin{bmatrix}
		\psi\\
		v
	\end{bmatrix}
	- \begin{bmatrix}
		0 & I\\
		-B^{2} & 0
	\end{bmatrix}
	\begin{bmatrix}
		\psi\\
		v
	\end{bmatrix}
	+
	\begin{bmatrix}
		0\\
		\pm 	\vert \psi\vert^{p-1}\psi
	\end{bmatrix}
	+
	\begin{bmatrix}
		0\\
		\chi \psi
	\end{bmatrix}
	=0,
\end{displaymath}
where $B=\sqrt{ -\Delta +k_{0}^{2}I}$.

\begin{displaymath}
	H= D(B)\oplus L^{2}(\mR^{3}) \mbox{ and } D(A)= D(B^{2})\oplus D(B).
\end{displaymath}

Then we have the following estimate \cite{Reed1976}:

\begin{Lem}
	Setting $\varphi =\langle \psi, v\rangle$ and $J(\varphi)=\langle 0, \vert \psi\vert^{2}\psi\rangle$, for $\varphi_{1},\varphi_{2}\in H$, $J$ satisfies
	\begin{enumerate}
		\item $\|J(\varphi_{1})\|\leq K\|\varphi_{1}\|^{3}$,
		\item $\|J(\varphi_{1})-J(\varphi_{2})\|\leq C(\|\varphi_{1}\|,\|\varphi_{2}\|) \|\varphi_{1}-\varphi_{2}\|$,
		\item $\|AJ(\varphi_{1})\|\leq K\|\varphi_{1}\|^{2}\|A \varphi_{1}\|$,
		\item $\|A(J(\varphi_{1})-J(\varphi_{2}))\|\leq C(\|\varphi_{1}\|,\|\varphi_{2}\|,\|A \varphi_{1}\|,\|A \varphi_{2}\|) \|A\varphi_{1}-A\varphi_{2}\|$.
	\end{enumerate}
\end{Lem}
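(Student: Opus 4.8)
The plan is to recognize that the four inequalities are exactly the $j=0$ and $j=1$ instances of Hypothesis \ref{Hyp:2.1} for the cubic Klein--Gordon nonlinearity, and to reduce everything to Sobolev estimates on $\mR^{3}$. First I would fix the norms. Since $B=\sqrt{-\Delta+k_{0}^{2}I}$ is invertible with bottom of spectrum $k_{0}>0$, the graph norm of $B$ is equivalent to the $H^{1}(\mR^{3})$ norm, so that $\|\varphi\|^{2}=\|\psi\|_{D(B)}^{2}+\|v\|_{L^{2}}^{2}$ satisfies $\|\psi\|_{H^{1}}\leq C\|\varphi\|$ (here and below $C$ denotes a constant depending on $k_{0}$ and the Sobolev constant, possibly changing line to line). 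From the matrix form of $A$ one computes $AJ(\varphi)=i\langle|\psi|^{2}\psi,0\rangle$, so $\|AJ(\varphi)\|=\||\psi|^{2}\psi\|_{D(B)}\sim\||\psi|^{2}\psi\|_{H^{1}}$; likewise $A\varphi=i\langle v,-B^{2}\psi\rangle$ gives $\|\psi\|_{H^{2}}\sim\|B^{2}\psi\|_{L^{2}}\leq\|A\varphi\|$. Finally, $A^{-1}$ acts by $\langle c,d\rangle\mapsto\langle iB^{-2}d,-ic\rangle$ and is bounded on $H$ with $\|A^{-1}\|\leq 1/k_{0}$, so $\|\varphi\|\leq k_{0}^{-1}\|A\varphi\|$ on $D(A)$; this lets lower-order terms be reabsorbed into the $\|A\varphi\|$ factor.

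Inequalities (1) and (2) use only the critical embedding $H^{1}(\mR^{3})\hookrightarrow L^{6}(\mR^{3})$. For (1), $\|J(\varphi)\|=\||\psi|^{2}\psi\|_{L^{2}}=\|\psi\|_{L^{6}}^{3}\leq C\|\psi\|_{H^{1}}^{3}\leq C\|\varphi\|^{3}$. For (2) I would use the pointwise algebraic bound
\[
\big||\psi_{1}|^{2}\psi_{1}-|\psi_{2}|^{2}\psi_{2}\big|\leq C\,(|\psi_{1}|^{2}+|\psi_{2}|^{2})\,|\psi_{1}-\psi_{2}|,
\]
followed by H\"older with $\tfrac12=\tfrac13+\tfrac16$ and the Sobolev embedding, yielding $\leq C(\|\varphi_{1}\|^{2}+\|\varphi_{2}\|^{2})\|\varphi_{1}-\varphi_{2}\|$.

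Inequalities (3) and (4), understood for $\varphi_{i}\in D(A)$, are where the derivative structure matters. Writing $|\psi|^{2}\psi=\psi^{2}\bar\psi$ gives $\nabla(|\psi|^{2}\psi)=2|\psi|^{2}\nabla\psi+\psi^{2}\nabla\bar\psi$, hence $|\nabla(|\psi|^{2}\psi)|\leq 3|\psi|^{2}|\nabla\psi|$. H\"older with $\tfrac12=\tfrac16+\tfrac16+\tfrac16$ then gives $\|\nabla(|\psi|^{2}\psi)\|_{L^{2}}\leq C\|\psi\|_{L^{6}}^{2}\|\nabla\psi\|_{L^{6}}\leq C\|\psi\|_{H^{1}}^{2}\|\psi\|_{H^{2}}\leq C\|\varphi\|^{2}\|A\varphi\|$; together with the $L^{2}$ piece (reabsorbed via $\|\varphi\|\leq k_{0}^{-1}\|A\varphi\|$) this proves (3). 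For (4) I would telescope the difference of gradients, grouping terms of the two types $|\psi_{1}|^{2}\nabla(\psi_{1}-\psi_{2})$ (with its conjugate analogue) and $(|\psi_{1}|^{2}-|\psi_{2}|^{2})\nabla\psi_{2}$, using $\big||\psi_{1}|^{2}-|\psi_{2}|^{2}\big|\leq(|\psi_{1}|+|\psi_{2}|)|\psi_{1}-\psi_{2}|$. The first type is bounded by $\|\psi_{1}\|_{L^{6}}^{2}\|\nabla(\psi_{1}-\psi_{2})\|_{L^{6}}\leq C\|\varphi_{1}\|^{2}\|\psi_{1}-\psi_{2}\|_{H^{2}}$, the second by $C(\|\varphi_{1}\|+\|\varphi_{2}\|)\|A\varphi_{2}\|\,\|\psi_{1}-\psi_{2}\|_{H^{1}}$; bounding $\|\psi_{1}-\psi_{2}\|_{H^{2}}\leq\|A(\varphi_{1}-\varphi_{2})\|$ and $\|\psi_{1}-\psi_{2}\|_{H^{1}}\leq k_{0}^{-1}\|A(\varphi_{1}-\varphi_{2})\|$ collects everything into $C(\|\varphi_{1}\|,\|\varphi_{2}\|,\|A\varphi_{1}\|,\|A\varphi_{2}\|)\|A(\varphi_{1}-\varphi_{2})\|$.

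I expect the main obstacle to be the scaling bookkeeping in (3)--(4): the estimates must allot exactly one factor of the stronger $H^{2}$ norm (supplied by $\|A\varphi\|$) and keep the other two $\psi$-factors at the $H^{1}$ level (supplied by $\|\varphi\|$). This works precisely because $\mR^{3}$ is the borderline case in which the triple product of $L^{6}$ functions lands in $L^{2}$, so there is no slack; one derivative on the cubic term forces exactly one $\nabla\psi\in L^{6}$, costing the single $H^{2}$ derivative that $A$ provides. The remaining work is the routine but lengthy telescoping of the cubic difference at the gradient level in (4).
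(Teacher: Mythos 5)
Your argument is correct and is essentially the standard one: the paper itself gives no proof of this lemma, simply citing Reed's lecture notes, and the estimates there for the cubic Klein--Gordon nonlinearity on $\mR^{3}$ are obtained exactly as you do, via the identifications $\|J(\varphi)\|=\||\psi|^{2}\psi\|_{L^{2}}$, $\|AJ(\varphi)\|\sim\||\psi|^{2}\psi\|_{H^{1}}$, the embedding $H^{1}(\mR^{3})\hookrightarrow L^{6}(\mR^{3})$, H\"older, and the telescoping of the cubic difference. Your additional bookkeeping (using $\|B^{2}\psi\|\leq\|A\varphi\|$ for the $H^{2}$ factor, $\|\varphi\|\leq k_{0}^{-1}\|A\varphi\|$ to absorb lower-order terms, and restricting (3)--(4) to $\varphi_{i}\in D(A)$) is accurate and fills in details the paper leaves implicit.
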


\subsection{Zakharov System and Langmuir Turbulence in Plasma}
The Zakharov equation \cite{Zakharov1972} for electromagnetic wave interaction with plasma can be derived using two-fluid model for ion-electron dynamics coupled with the Maxwell's equations as in  \cite{Sulem1999} is given by:
\begin{displaymath}
	i\frac{\partial \psi}{\partial t} +\Delta \psi - n \psi =0,
\end{displaymath}
\begin{displaymath}
	\frac{\partial^{2}n}{\partial t^{2}}-\Delta n=\Delta \vert \psi\vert^{2}.
\end{displaymath}
A number of rigorous studies have been made \cite{Added1988, Sulem1999, Ginibre1997} and a typical local solvability established in these papers is as follows:

\begin{Thm}
For $d\geq 2$, and $(\psi(\cdot, 0), n(\cdot, 0), \frac{\partial }{\partial t}n(\cdot, 0))	\in H^{k}(\mR^{d})\times H^{l}(\mR^{d})\times H^{l-1}(\mR^{d})$, $l\geq 0, k\geq\frac{1}{2}(l+1)$, there exists a unique solution 
\begin{displaymath}
(\psi, n, \partial_{t}n)\in C([0,T^{*}); H^{k}(\mR^{d})\times H^{l}(\mR^{d})\times H^{l-1}(\mR^{d})),
\end{displaymath}
 where $T^{*}$ depends on the initial data. For $d=1$ such a solution is global in positive and negative time $t\in \mR$.
\end{Thm}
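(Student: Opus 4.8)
The plan is to reduce the second-order Zakharov system to a coupled pair of first-order dispersive equations matching the abstract operator $A=\mathrm{diag}(\Delta,|\na|)$ of the fifth model, run a contraction-mapping argument in Strichartz-augmented Sobolev spaces to get the local solution, and finally promote it to a global solution in $d=1$ by conservation laws. The claim that $d\ge2$ is local while $d=1$ is global is a direct reflection of subcritical versus critical/supercritical scaling of the coupling.

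First I would symmetrize the hyperbolic part. Writing $\omega=|\na|=(-\Delta)^{1/2}$ and introducing the complex variable $N=n+i\omega^{-1}\pa_t n$, the wave equation $\pa_t^2 n-\Delta n=\Delta|\psi|^2$ becomes the half-wave equation
\begin{displaymath}
	i\pa_t N=\omega N+\omega|\psi|^2,\qquad n=\mathrm{Re}\,N,
\end{displaymath}
while the Schr\"odinger equation reads $i\pa_t\psi+\Delta\psi=\tfrac12(N+\bar N)\psi$. This is precisely a semilinear evolution of the form $\pa_t\varphi=-iA\varphi+J(\varphi)$ with $\varphi=(\psi,N)$ and free propagators $e^{it\Delta}$ and $e^{-it\omega}$. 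I would then pass to the Duhamel (mild) formulation and set up the fixed-point map on a ball in $C([0,T];H^k\times H^l)$ intersected with the mixed Strichartz norms for the Schr\"odinger group. The low-frequency factor $\omega^{-1}$ demands care, handled by a Littlewood--Paley splitting or by carrying $\pa_t n\in H^{l-1}$ directly so that no genuine singularity is introduced.

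The analytic heart is the pair of bilinear estimates that make the map a contraction: controlling $\tfrac12(N+\bar N)\psi$ in $H^k$ (from $n\in H^l$, $\psi\in H^k$) and controlling $\omega|\psi|^2=|\na||\psi|^2$ in $H^{l}$. A naive product estimate would force $k\ge l+1$; the sharp relation $k\ge\tfrac12(l+1)$ together with $l\ge0$ is recovered only by exploiting the dispersive smoothing of $e^{it\Delta}$ through Strichartz and $L^2$-based bilinear ($X^{s,b}$, Bourgain-type) estimates, which redistribute derivatives more favorably across the quadratic interaction. Combined with the Kato--Ponce fractional Leibniz rule and Sobolev embedding, these give the required nonlinear bounds, so that for $T$ chosen small depending only on the data norms the map is a self-map and a contraction; uniqueness follows from the contraction and time-continuity from the integral equation. \emph{This is the step I expect to be the main obstacle}: closing the estimates at the sharp regularity balance $k\ge\tfrac12(l+1)$ rather than the crude $k\ge l+1$.

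Finally, for $d=1$ I would upgrade to a global solution using the conserved quantities. The mass $\|\psi\|_{L^2}^2$ and the Zakharov Hamiltonian, which controls $\|\na\psi\|_{L^2}^2+\|n\|_{L^2}^2+\|\omega^{-1}\pa_t n\|_{L^2}^2+\int_{\mR} n|\psi|^2$, furnish in one dimension an a priori bound on the $H^1\times L^2\times H^{-1}$ norm: here the coupling term is estimated by Gagliardo--Nirenberg as $\bigl|\int n|\psi|^2\bigr|\lesssim\|n\|_{L^2}\|\psi\|_{L^2}^{3/2}\|\na\psi\|_{L^2}^{1/2}$ and absorbed by Young's inequality, a subcritical gain unavailable for $d\ge2$. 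Since the local existence time depends only on the data norms, this a priori control precludes finite-time blow-up and iterates the local solution to all $t\in\mR$, whereas for $d\ge2$ the possibility of focusing collapse leaves only the local statement.
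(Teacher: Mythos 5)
First, a point of orientation: the paper does not actually prove this theorem. It is quoted as ``a typical local solvability established in these papers,'' with the proof deferred to the cited works of Added--Added, Sulem--Sulem, and Ginibre--Tsutsumi--Velo; the authors' own contribution in this subsection is only the subsequent substitution $v=n-i|\nabla|^{-1}\partial_{t}n$ recasting the system in their unified semilinear form. Your outline is therefore competing with the literature rather than with an in-paper argument, and as a roadmap it is faithful to that literature: your reduction via $N=n+i\omega^{-1}\partial_{t}n$ is (up to a sign) exactly the paper's own change of variables, the Duhamel/contraction setup is standard, the observation that the sharp index relation $k\geq\frac{1}{2}(l+1)$ is out of reach of naive product estimates and requires Bourgain-type $X^{s,b}$ bilinear estimates is precisely the content of the Ginibre--Tsutsumi--Velo paper, and the $d=1$ globalization via mass and the Hamiltonian with a Gagliardo--Nirenberg absorption of the coupling term is the classical argument.

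That said, as a proof the proposal has a genuine gap, and you flag it yourself: the bilinear estimates at the claimed regularity \emph{are} the theorem, and they are asserted rather than proved. Without them the contraction closes only in a cruder range (roughly $k\geq l+1$ with $l$ large enough that $|\nabla||\psi|^{2}$ lands in $H^{l}$), not the stated one. A second, smaller gap sits in the $d=1$ global claim: the Hamiltonian controls the $H^{1}\times L^{2}\times H^{-1}$ norm only for finite-energy data, i.e.\ $k\geq 1$, whereas the theorem permits $l=0$, $k\geq\frac{1}{2}$; for $\frac{1}{2}\leq k<1$ the conservation-law argument does not apply as written, and one must either restrict the global statement to the energy space (with persistence of regularity upward) or invoke low-regularity globalization techniques. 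Neither gap is a wrong turn --- both are filled in the cited references --- but a self-contained proof would have to supply them.
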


We can however frame the Zakharov equation in our unified formulation by the following change of variables.

Denoting $\vert \nabla \vert =\sqrt{-\Delta}$ and using the substitution
\begin{displaymath}
	v=n-i\vert\nabla\vert^{-1}\partial_{t}n
\end{displaymath}
in the Zakharov system we arrive at
\begin{equation}
	i\frac{\partial \psi}{\partial t} +\Delta \psi - \psi \Re{(v)}=0,	\label{eqn:2.6}
\end{equation}
\begin{equation}
	i\frac{\partial v}{\partial t}+\vert \nabla \vert v +\vert \nabla \vert \vert \psi\vert^{2}=0.\label{eqn:2.7}
\end{equation}
We write this system in the semilinear form as
\begin{equation}
	i	\frac{\partial}{\partial t} \begin{bmatrix}
		\psi\\
		v
	\end{bmatrix}
	+ \begin{bmatrix}
		\Delta & 0\\
		0 & \vert \nabla \vert
	\end{bmatrix}
	\begin{bmatrix}
		\psi\\
		v
	\end{bmatrix}
	+
	\begin{bmatrix}
		- \psi \Re{(v)}\\
		\vert \nabla \vert \vert \psi\vert^{2}
	\end{bmatrix}
	=0. \label{eqn:2.8}
\end{equation}
We can take $H=H^{3}(\mR^{3})\oplus H^{2}(\mR^{3})$ see for example \cite{Sulem1999}.

\subsection{Free Electron Laser Models: Maxwell-Dirac Equations}
Maxwell-Dirac equations describing classical quantum electrodynamics can also be regarded as the fundamental governing equations for free electron lasers \cite{Madey2010, Becker1979} and high power microwave generation dynamics. Rigorous theory of these models in the detrministic setting has a long history \cite{Dyson1949, Gross1966, Chadam1973, Flato1987, Strauss1978}. We will give a brief overview of this model and indicate how it fits in the unified mathematical theory and stochastic analysis developed in this paper. We start with the Maxwell's equations:
\begin{displaymath}
	\nabla \cdot E =\rho, \hspace{.1in} \nabla \cdot B=0,
\end{displaymath}
\begin{displaymath}
	\nabla \times E +\partial_{t}B=0, \hspace{.1in} \nabla \times B -\partial_{t}E =J.
\end{displaymath}
Dirac equation
\begin{displaymath}
	\left ( \alpha^{\mu}D_{\mu}+ m\beta\right)\psi=0.
\end{displaymath}
Here $m\geq 0$, $\alpha^{\mu}, \beta$ are $4\times 4 $ Dirac matrices. 

$E,B:\mR^{1+3}\rightarrow \mR$ are electric field and magnetic field respectively and $\psi:\mR^{1+3}\rightarrow \mC^{4}$ is the Dirac spinner field. 
Here $\alpha $ are linear operators in spin space that satisfy $\alpha^{\mu}\alpha^{\nu}+\alpha^{\nu}\alpha^{\mu}=2 g^{\mu \nu}$. $g^{00}=1, g^{11}=-1, g^{\mu\nu}=0$ for $\nu \not = \mu$ and $\alpha^{0*}=\alpha^{0}, \alpha^{1*}=-\alpha^{1}$.

We represent the electromagnetic fields by real four dimensional vector potential $A_{\mu}$, $\mu=0,1,2,3$:
\begin{displaymath}
	B=\nabla A, \hspace{.1in} E=\nabla A_{0}-\partial_{t}A, \mbox{ with  } A=(A_{1},A_{2}, A_{3}).
\end{displaymath}
The couplings are:
\begin{displaymath}
	J^{\mu}=\langle \alpha^{\mu}\psi,\psi\rangle_{\mC^{4}},
\end{displaymath}
\begin{displaymath}
	\rho=J^{0}=\vert \psi\vert^{2},\hspace{.1in} J=(J^{1}, J^{2}, J^{3}),
\end{displaymath}
the gauge covariant derivative
\begin{displaymath}
	D_{\mu}= \partial^{(A)}_{\mu}=\frac{1}{i}\partial_{\mu} -A_{\mu}.
\end{displaymath}
The above system is invariant under the transform:
\begin{displaymath}
	\psi\rightarrow \psi'=e^{i\chi}\psi, \hspace{.1in} A_{\mu}\rightarrow A'_{\mu}=A_{\mu}+\partial_{\mu}\chi,
\end{displaymath}
for any $\chi:\mR^{1+3}\rightarrow \mR$. 

We impose Lorenz gauge condition:
\begin{displaymath}
	\partial^{\mu}A_{\mu}=0    \mbox{  which is the same as   } \partial_{0}A_{0}=\nabla\cdot A.
\end{displaymath}
This results in the Maxwell-Dirac system:
\begin{equation}
	\square A_{\mu} =(\Delta -\partial_{0}^{2})A_{\mu} =  -\langle \alpha_{\mu}\psi, \psi\rangle_{\C^{4}}, \label{eqn:2.9}
\end{equation}

\begin{equation}
	\left ( -i \alpha^{\mu}\partial_{\mu} + m\beta \right )\psi =  A_{\mu}\alpha^{\mu} \psi, \label{eqn:2.10}
\end{equation}
along with the Lorenz gauge condition.
Now denoting the operator:
\begin{displaymath}
	A_{W}=- i\begin{bmatrix}
		0 & I\\
		-B_{0}^{2} & 0
	\end{bmatrix}
\end{displaymath}
where $B_{0}=\sqrt{-\Delta+k_{0}^{2}I}$.

The Dirac operator as $D_{e}=-i \alpha\cdot\nabla + m\beta $, 
\begin{displaymath}
	D_{e}^{2}=\left(
	\begin{array}{ccccc}
		B_{e}^{2}                                    \\
		& B_{e}^{2}              &   & \text{\huge0}\\
		&               & B_{e}^{2}                 \\
		& \text{\huge0} &   & B_{e}^{2}            \\
	\end{array}
	\right),
\end{displaymath}
where $B_{e}=\sqrt{-\Delta+m^{2}I}$.

Now setting $v=(A, \partial_{t}A)$ we can write the Maxwell-Dirac system as a semilinear evolution:
\begin{equation}
	\frac{\partial}{\partial t} \begin{bmatrix}
		\psi\\
		v
	\end{bmatrix}
	+ i\begin{bmatrix}
		D_{e} & 0\\
		0 & A_{W}
	\end{bmatrix}
	\begin{bmatrix}
		\psi\\
		v
	\end{bmatrix}
	+
	\begin{bmatrix}
		-	A_{\mu}\alpha^{\mu} \psi\\
		(0,i\langle \alpha_{\mu}\psi, \psi\rangle_{\mC^{4}} +k_{0}^{2})
	\end{bmatrix}
	=0. \label{eqn:2.11}
\end{equation}
We will take escalated energy spaces \cite{Chadam1972, Chadam1973, Reed1976} as state space:
\begin{displaymath}
	H= (\oplus_{i=0}^{3}D(B_{e}^{3}))\oplus (D(B_{0}^{3}) \oplus D(B_{0}^{2})).
\end{displaymath}
Then the operator:
\begin{displaymath}
	A=
	\begin{bmatrix}
		D_{e} & 0\\
		0 & A_{W}
	\end{bmatrix}	
\end{displaymath}
is self-adjoint on
\begin{displaymath}
	D(A)= (\oplus_{i=0}^{3}D(B_{e}^{4}))\oplus (D(B_{0}^{4}) \oplus D(B_{0}^{3})).
\end{displaymath}
The properties of the source term $J$ as required in the Hypothesis \ref{Hyp:2.1} and the main theorems are verified in \cite{Chadam1972, Chadam1973, Chadam1973b,Chadam1974, Reed1976}.

\subsection{Sine-Gordon Equation}
Sine-Gordon equation is a well-known model in quantum field theory and soliton theory. In \cite{Sritharan2023} we have initiated the stochastic quantization of this model and here we indicate that the hypotheses (Hypothesis \ref{Hyp:2.1} and hypothesis in Theorem \ref{Thm:2.7}) needed for the main theorems of this paper are satisfied by this model to ensure global unique solvability theorems \ref{Thm:2.5}, \ref{Thm:2.7} in Section 2.
\begin{displaymath}
	\frac{\partial^{2}}{\partial t^{2}}u-\Delta u +k_{0}^{2}u -g \mbox{ sin} (u) +V(t)u=0.
\end{displaymath}
Setting $v=\partial_{t}u$ and denoting $B=\sqrt{ -\Delta +k_{0}^{2}I}$ we reframe the above dynamics as:
\begin{displaymath}
	\frac{\partial}{\partial t} \begin{bmatrix}
		u\\
		v
	\end{bmatrix}
	- \begin{bmatrix}
		0 & I\\
		-B^{2} & 0
	\end{bmatrix}
	\begin{bmatrix}
		u\\
		v
	\end{bmatrix}
	+
	\begin{bmatrix}
		0\\
		g \mbox{ sin} (u)
	\end{bmatrix}
	+
	\begin{bmatrix}
		0\\
		V(t)u
	\end{bmatrix}
	=0,
\end{displaymath}
which is in the form \ref{eqn:1.1}. Let $H=D(B)\oplus L^{2}(\mR^{n})$, and $D(A)=D(B^{2})\oplus D(B)$. We then have \cite{Reed1976}:

\begin{Lem} Setting $ \varphi=\langle u, v\rangle$, $J(\varphi)=\langle 0, \mbox{ sin} (u)\rangle$ we have the following estimates: 
	\begin{enumerate}
		\item $\|J(\varphi)\|\leq \|\varphi\|,$
		\item $\|AJ(\varphi)\|^{2}\leq K \|\varphi\|^{2}$,
		\item $\|J(\varphi)-J(\psi)\|\leq K \|\varphi-\psi\|$,
		\item $ \|A(J(\varphi)-J(\psi))\|\leq K \|\varphi-\psi\|\|A\varphi\|+\|\varphi-\psi\|$.
	\end{enumerate}
\end{Lem}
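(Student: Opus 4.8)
The strategy is to exploit the block structure of the operator and collapse all four claims to $L^2$-estimates on the composition $u\mapsto\sin u$. With the energy norms $\|\varphi\|^2=\|Bu\|^2+\|v\|^2$ on $H=D(B)\oplus L^2(\mR^n)$ and $\|A\varphi\|^2=\|Bv\|^2+\|B^2u\|^2$ on $D(A)=D(B^2)\oplus D(B)$ (the graph norms being equivalent to these, since $B=\sqrt{-\Delta+k_0^2I}\ge k_0 I$ is boundedly invertible), I first record that $J(\varphi)=\langle 0,\sin u\rangle$ and, since $A\langle a,b\rangle=\langle b,-B^2a\rangle$, that $AJ(\varphi)=\langle \sin u,0\rangle$. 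Hence $\|J(\varphi)\|=\|\sin u\|_{L^2}$ and $\|AJ(\varphi)\|=\|B\sin u\|_{L^2}$, and likewise for differences. This reduces everything to pointwise and Sobolev estimates on $u\mapsto\sin u$.

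For the first and third estimates I would use the pointwise inequalities $|\sin s|\le|s|$ and $|\sin a-\sin b|\le|a-b|$ together with $\|u\|_{L^2}\le k_0^{-1}\|Bu\|\le k_0^{-1}\|\varphi\|$; this yields $\|\sin u\|_{L^2}\le\|u\|_{L^2}\le C\|\varphi\|$ and the Lipschitz bound immediately. For the second estimate I would expand $\|B\sin u\|^2=\|\nabla\sin u\|^2+k_0^2\|\sin u\|^2$, observe that $\nabla\sin u=\cos u\,\nabla u$ so $|\nabla\sin u|\le|\nabla u|$ pointwise, and conclude $\|B\sin u\|^2\le\|\nabla u\|^2+k_0^2\|u\|^2=\|Bu\|^2\le\|\varphi\|^2$. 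These three bounds are routine.

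The fourth estimate is the genuine obstacle, as it must produce a quadratic right-hand side. I would again write $\|B(\sin u_1-\sin u_2)\|^2=\|\nabla(\sin u_1-\sin u_2)\|^2+k_0^2\|\sin u_1-\sin u_2\|^2$, where the low-order term satisfies $k_0^2\|\sin u_1-\sin u_2\|^2\le k_0^2\|u_1-u_2\|^2\le\|B(u_1-u_2)\|^2\le\|\varphi-\psi\|^2$ and so contributes the additive $\|\varphi-\psi\|$. For the gradient term I split via the chain rule, $\nabla(\sin u_1-\sin u_2)=(\cos u_1-\cos u_2)\nabla u_1+\cos u_2\,\nabla(u_1-u_2)$; the second piece is controlled by $\|\nabla(u_1-u_2)\|\le\|B(u_1-u_2)\|\le\|\varphi-\psi\|$. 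The first piece, after $|\cos u_1-\cos u_2|\le|u_1-u_2|$, is the cross term $\|(u_1-u_2)\nabla u_1\|_{L^2}$, and here one must pay with Sobolev embeddings: in $\mR^3$, Hölder with $L^6\times L^3$ gives $\|(u_1-u_2)\nabla u_1\|_{L^2}\le\|u_1-u_2\|_{L^6}\|\nabla u_1\|_{L^3}\le C\|u_1-u_2\|_{H^1}\|u_1\|_{H^2}\le C\|\varphi-\psi\|\|A\varphi\|$, using $H^1\hookrightarrow L^6$ and $H^1\hookrightarrow L^3$. Assembling the pieces yields $\|B(\sin u_1-\sin u_2)\|\le C\|\varphi-\psi\|\|A\varphi\|+\|\varphi-\psi\|$, which is exactly the fourth estimate. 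The main difficulty, and the only place dimension enters, is this cross term: the Hölder split must spend the $H^2$ regularity (available through $\|A\varphi\|$) on $\nabla u_1$ while demanding only $H^1$ of the difference, and the admissible exponents are dictated by the Sobolev embeddings valid for $n\le 3$; for larger $n$ one would instead work in the escalated energy spaces, taking enough powers of $A$ as was done for the Maxwell--Dirac system.
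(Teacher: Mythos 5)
Your proposal is correct and is essentially the standard argument that the paper delegates to \cite{Reed1976} without reproducing: reduce each claim to estimates on $u\mapsto\sin u$ via the block identities $\|J(\varphi)\|=\|\sin u\|_{L^2}$ and $\|AJ(\varphi)\|=\|B\sin u\|$, use $|\sin s|\le|s|$, $|\nabla\sin u|\le|\nabla u|$ for (1)--(3), and handle the cross term $(\cos u_1-\cos u_2)\nabla u_1$ in (4) by H\"older and Sobolev embedding to produce the $\|\varphi-\psi\|\,\|A\varphi\|$ factor. Two minor caveats, neither of which affects correctness: estimate (1) comes out as $k_0^{-1}\|\varphi\|$ rather than literally $\|\varphi\|$ (constants are clearly meant to be absorbed throughout the lemma), and your $L^6\times L^3$ split in (4) requires $n\le 3$, a restriction the paper leaves implicit but which you correctly identify as the only place the dimension enters.
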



\begin{thebibliography}{30} 
	
	\bibitem{Added1988} H. Added and S. Added, Equations of Langmuir turbulence and nonlinear Schr\"odinger equation: smoothness and approximation, {\it J. Functional Anal.}, {\bf 79}, (1988), 183-210.
	\bibitem{Alshanskiy2014} M. A. Alshanskiy, Wiener-Itô chaos expansion of Hilbert space valued random variables, {\it Journal of Probability} {\bf 2014}, Article ID 786854, (2014).
	\bibitem{Barbu1976} V. Barbu, {\it Nonlinear Semigroups and Differential Equations in Banach Spaces}, Nordorff Publishers, Netherland, 1976.
	\bibitem{Becker1979} W. Becker and H. Mitter, Quantum theory of a free electron laser, {\it Z. Physik B}, {\bf 35}, (1979), 399-404.
	\bibitem{Berezin1966} F. A. Berezin, {\it The Method of Second Quantization}, Academic Press, New York, 1966.
	\bibitem{Berezin1971} F. A. Berezin, Wick and anti-Wick operator symbols, {\it Math. USSR-Sb.,} {\bf 15}, (1971), 577-606.
	\bibitem{Cazenave1998} T. Cazenave and A. Haraux, {\it An Introduction to Semilinear Evolution Equations}, Clarendon Press, Oxford, 1998.
	\bibitem{Chadam1972} J. M. Chadam, On the Cauchy problem for the coupled Maxwell-Dirac equations, {J. Math. Phys.}, {\bf  13}, (1972), 597-604.
	\bibitem{Chadam1973} J. M. Chadam, Global solutions of the Cauchy problem for the (classical) coupled Maxwell-Dirac equations in one space dimension, {\it Journal of Functional Analysis}, {\bf 13}, (1973), 173-184.
	\bibitem{Chadam1973b} J. M. Chadam, Asymptotic behavior of equations arising in quantum field theory, {\it Applicab!e Analysis}, {\bf 3}, (1973), 377-402.
	\bibitem{Chadam1974} J. M. Chadam and R. T. Glassey, On certain global solutions for the Cauchy problem for (classical) coupled Klein-Gordon-Dirac equations in one and three space dimensions, {Archiv for Rational Mechanics and Analysis}, {\bf 54}, (1974), Issue.3, 223-237.
	\bibitem{DaPrato1972} G. Da Prato, Quelques r\'esultats d'existence et r\'egularit\'e pour un probl\'eme non lin\'ire de la th\'eorie du contr\^ole, {\it  Bull. Soc. math. France}, {\bf  31-32}, (1972), 127-132. 
	\bibitem{DaPrato1996} G. Da Prato and J. Zabczyk, {\it Ergodicity for Infinite Dimensional Systems}, Cambridge University Press, 1996.
	\bibitem{DaPrato2014} G. Da Prato and J. Zabczyk, {\it Stochastic Equations in Infinite Dimensions}, Cambridge University Press, 2014.
	\bibitem{Derm1996} A. Dermoune, Non-commutative Burgers equation, {\it Hokkaido mathematical journal}, {\bf 25}, (1996), 315-332.
	\bibitem{Dyson1949} F. J. Dyson, The radiation theories of Tomonaga, Schwinger, and Feynman, {\it Physical Review}, {\bf 75}, No. 3, (1949), 486-502. 
	\bibitem{Flato1987} M. Flato, J. Simon and E. Taflin, On global solutions of the Maxwell-Dirac equations, {\it Commun. Math. Phys.} {\bf 112}, (1987), 21-49.
	\bibitem{Gelfand1964} I. M. Gelfand and N. Ya. Vilenkin, {\it Generalized Functions}, {\bf 4}, Academic Press,New York, 1964.
	\bibitem{Ginibre1997} J. Ginibre, Y. Tsutsumi and G. Velo, On the Cauchy problem for the Zakharov system, {\it J. Functional Analysis}, Vol. 151, (1997), 384-436.
	\bibitem{Gross1966} L. Gross, The Cauchy problem for the coupled Maxwell and Dirac equations, {\it Communications on Pure and Applied Mathematics}, {\bf  XIX}, (1966), 1-15.
	\bibitem{Gustafsson2019} Jonathan Gustafsson, Benjamin F. Akers, Jonah A. Reeger, Sivaguru S. Sritharan, Atmospheric propagation of high energy lasers, {\it Eng. Math. Lett.}, {\bf 2019} (2019), Article ID 7.
	\bibitem{Hida1980} T. Hida, {\it Brownian Motion}, Springer-Verlag, New York, 1980.
	\bibitem{Hida1994} T. Hida, H-H. Kuo, J. Potthoff and L. Streit, {\it White Noise: An Infinite Dimensional Calculus}, Springer, New York, 1994.
	\bibitem{Hille1996} E. Hille and R. S. Phillips, {\it Functional Analysis and Semigroups}, American Mathematical Society, Providence, RI., 1996.
	\bibitem{Holden2010} H. Holden, B. Oksendal, J. Uboe and T. Zhang, {\it Stochastic Partial Differential Equations: A Modeling, White Noise Functional Approach}, Springer-Verlag, New York, 2010.
	\bibitem{Ji2021} U. C. Ji and P. C. Ma, Wick calculus for vector-valued Gaussian white noise functionals, {\it Probability and Mathematical Statistics}, {\bf 41},  (2021), No.2, 283–302.
	\bibitem{Kato1976} T. Kato, {\it Perturbation Theory of Linear Operators}, Springer-Verlag, Berlin, 1976.
	\bibitem{Kato1976a} T. Kato, Linear and quasi-linear equations of evolutions of hyperbolic type, in: {\it Hyperbolicity}, Edited by G. Da Prato and G. Geymonat, CIME Summer School, {\bf 72},(1976),125-191, Springer-Verlag, Berlin.
	\bibitem{Kon1993} Yu. G. Kondratiev and L. Streit, Spaces of white noise distributions: Constructions, descriptions, applications, I, {\it Rep. Math. Phys.}, {\bf 33}, (1993), 341-366.
	\bibitem{Kree1978} P. Kre\'{e} and R. Raczka, Kernels and symbols of operators in quantum field theory, {\it Ann. Inst. H. Poincar\'{e}}, {\bf A28}, (1978), 41-73.
	\bibitem{Kuiper1980} H. J. Kupier and S. M. Shew, Strong solutions for infinite dimensional Riccati equations arising in transport theory, {\it SIAM J. Math. Anal.}, {\bf 11}, (1980), No.2, 211-222.
	\bibitem{Kuiper1985} H. J. Kupier, Generalized operator Riccati equations, {SIAM J. Math. Anal.}, {\bf 16}, (1985), No. 4, 675-694.
	\bibitem{Kubo1980} I. Kubo and S. Takenaka, Calculus on Gaussian white noise I-IV, {\it Proc. Japan Acad.}, {\bf 56A} (1980), 376-380; 411-416; {\bf 57A} (1981), 433-437; {\bf 58A} (1982), 186-189.
	\bibitem{Kuo1975} H-H, Kuo, {\it Gaussian Measures in Banach Spaces }, Lecture Notes in Mathematics, Vol. 463, Springer-Verlag, New York, 1975.
	\bibitem{Kuo1996} H-H, Kuo, {\it White Noise Distribution Theory}, CRC Press, Boca Raton, Fl., 1996.
	\bibitem{Madey2010} J. M. Madey, Invention of the Free Electron Laser, {\it Reviews of Accelerator Science and Technology, } {\bf  3}, (2010) 1–12.
	\bibitem{Meyer1973} G. H. Meyer, Edited, {Initial Value Methods for Boundary Value Problems: Theory and Applications of Invariant Embedding}, Academic Press, New York, 1973.
	\bibitem{Mohan2017} M. T. Mohan and S. S. Sritharan, $L^{p}$-solutions to stochastic Navier-Stokes equation with L\'evy noise with $L^{m}$ initial data, {\it Evolution Equation and Control Theory}, {\bf 6},(2017), No. 3, 409-425.
	\bibitem{Mohan2018} M. T. Mohan and S. S. Sritharan, Stochastic quasilinear symmetric hyperbolic system perturbed by L\'evy noise, {\it Journal of Pure and Applied Functional Analysis}, {\bf 3}, (2018), No.1,  137-178.
	\bibitem{Obata1993} N. Obata, An analytic characterization of symbols of operators on white noise functionals, {\it J. Math. Soc. Japan}, {\bf 45}, (1993),No.3, 421-445.
	\bibitem{Obata1994} N. Obata, {\it White Noise Calculus and Fock Space}, Springer-Verlag, New York, 1994.
	\bibitem{Obata1994a} N. Obata, Operator calculus on vector-valued white noise functionals, {\it J. Functional Analysis}, {\bf 121}, (1994), 185-232.
	\bibitem{Obata1997} N. Obata, Quantum stochastic differential equations in terms of quantum white noise, {\it Nonlinear Analysis, Theory, Methods and Applications}, {\bf 30}, (1997),No.1, 279-290.
	\bibitem{Obata1999} N. Obata, Wick product of white noise operators and quantum stochastic differential equations, {\it J. Math. Soc. Japan}, {\bf 51}, (1999), No.3, 613-641.
	\bibitem{Ouerdiane1998} H. Ouerdiane, Algébres nucléaires de fonctions entiéres et équations aux derivées partielles stochastiques, {\it Nagoya Mathematical Journal}, {\bf 151}, (1998), 107-127.
	\bibitem{Papa1973} G. C. Papanicolaou, D. McLaughlin, and R. Burridge, A Stochastic Gaussian Beam, {\it J. Math, Phys.}, {\bf 14}, (1973), No.1, 84-89.
	\bibitem{Parisi1981} G. Parisi and Y. S. Wu, Perturbation theory without gauge fixing, {Scientia Sinica}, {\bf XXIV}, (1981),No.4, 483-496.
	\bibitem{Pazy1983} A. Pazy, {\it Semigroups of Linear Operators and Applications to Partial Differential Equations}, Springer-Verlag, New York, 1983.
	\bibitem{Reed1975} M. C. Reed, Higher order estimates and smoothness of nonlinear wave equations, {\it Proceedings of the American Mathematical Society}, {\bf  51}, (1975),No.1, 79-85.
	\bibitem{Reed1976} M. C. Reed, {\it Abstract Nonlinear Wave Equations}, Lecture Notes in Mathematics, {\bf 507}, Springer-Verlag, Berlin 1976.
	\bibitem{Segal1963} I. Segal, Nonlinear Semigroups, {\it Annals of Mathematics}, {\bf 78}, (1963),No.2, 339-363.
	\bibitem{Simon1974} B. Simon, {\it The $P(\Phi)_{2}$ Euclidean (Quantum) Field Theory}, Princeton Series in Physics, Princeton, NJ, 1974.
	\bibitem{Sp2002} P. Sprangle, J. R. Peñano, and B. Hafizi, 	Propagation of intense short laser pulses in the atmosphere, {\it Phys. Rev.} {E \bf 66}, Issue: 4, 2002, 1-21.
	\bibitem{Sprangle2003} P. Sprangle, J. P. Pe\~nano, A. Ting, B. Hafizi and D. E. Gordon, Propagation of short, High-intensity Laser pulses in air, {\it Journal of Directed Energy}, {\bf 1}, (2003), 73-92.
	\bibitem{Sritharan2023} S. S. Sritharan and S. Mudaliar, Stochastic quantization of Laser propagation models, {\it Infinite Dimensional Analysis, Quantum Probability and Related Topics}, Published on-line July 2023.
	\bibitem{Sritharan2025a} S. S. Sritharan and S. Mudaliar, Approximations of stochastic nonlinear wave equations, {in preparation} (2025).
	\bibitem{Sritharan2025b} S. S. Sritharan and S. Mudaliar, Filtering of stochastic nonlinear wave equations, {in preparation} (2025).
	\bibitem{Strauss1989} W. Strauss, {\it Nonlinear Wave Equations}, CBMS Regional Conference Series No.73, American Mathematical Society, Providence,1989.
	\bibitem{Strauss1978} W. Strauss, Nonlinear invariant wave equations, {\it Invariant Wave Equations}, Lecture notes in Physics, No. 78,  (1978), 197-249, Springer-Verlag, New York.
	\bibitem{Strohbehn1978} J. W. Strohbehn, {\it Laser Beam Propagation in the Atmosphere}, Springer-Verlag, New York, 1978.
	\bibitem{Sulem1999} C. Sulem and P-L. Sulem, {\it Nonlinear Schr\"odinger Equations: Self-Focusing and Wave Collapse}, Springer-Verlag, New York 1999.
	\bibitem{Tap1977} F. D. Tappert, The parabolic approximation method, in  Joseph B. Keller, John S. Papadakis, Editors, {\it Wave Propagation and Underwater Acoustics}, Springer-Verlag, Berlin, 1977.
	\bibitem{Tartar1974} L. Tartar, Sur I’\'Etude Directe d'Equations non Lin\'eaires lntervenant en Th\'eorie du Contr\^ole Optimal, {J. Functional Analysis}, {\bf 6}, (1974), 1-47.
	\bibitem{Temam1971} R. Temam, Sur I'\'equation de Riccati associ\'ee \'a des op\'erateurs non born\'es, en dimension infinie,{J. Functional Analysis}, {\bf 7}, (1971), 85-115.
	\bibitem{vanNeerven2020} J. van Neerven, M. Veraar, Maximal inequalities for stochastic convolutions in 2-smooth Banach spaces and applications to stochastic evolution equations, {\it Phil. Trans. R. Soc. A} {\bf 378}: 20190622.
	\bibitem{Tatarski1961} V. I. Tatarski, {\it Wave Propagation in a Turbulent Medium}, Dover Publishers, New York, 1961.
	\bibitem{Yosida1991} K. Yosida, {\it Functional Analysis}, Springer-Verlag, New York, (1991).
	\bibitem{Zakharov1972} V. E. Zakharov, Collapse of Langmuir waves, {\it Soviet Phys. JETP}, {\bf 35}, (1972), No.5, 908-913.
	\bibitem{Zohuri2016} B. Zohuri, {\it Directed Energy Weapons: Physics of High Energy Lasers (HEL)}, Springer-verlag, New York, 2016.
	
\end{thebibliography}
\end{document}